\title{Generating Functions of Nestohedra and Applications}
\author{Andrew G. Fenn\footnote{The author would like to thank the EPSRC for his
funding.}}
\theoremstyle{definition}
\newtheorem{defn}{Definition}[section]
\newtheorem{thm}[defn]{Theorem}
\newtheorem{lem}[defn]{Lemma}
\newtheorem{cor}[defn]{Corollary}
\newtheorem{exm}{Example}
\newtheorem{con}[defn]{Conjecture}
\newcommand{\nat}{\ensuremath{\mathbb N}}
\newcommand{\rat}{\ensuremath{\mathbb Q}}
\newcommand{\perm}[1]{\ensuremath{Pe^{#1}}}
\newcommand{\permu}{\ensuremath{Pe}}
\newcommand{\astr}[1]{\ensuremath{St^{#1}}}
\newcommand{\astro}{\ensuremath{St}}
\newcommand{\gengra}[4]{\ensuremath{\Gamma_{#1,#2,#3,#4}}}
\newcommand{\genpol}[4]{\ensuremath{P_{#1,#2,#3,#4}}}
\newcommand{\ggkk}[2]{\ensuremath{\Gamma_{\nabla,#1,\nabla,#2}}}
\newcommand{\gpkk}[2]{\ensuremath{P_{\nabla,#1,\nabla,#2}}}
\newcommand{\ggke}[2]{\ensuremath{\Gamma_{\nabla,#1,\because,#2}}}
\newcommand{\gpke}[2]{\ensuremath{P_{\nabla,#1,\because,#2}}}
\newcommand{\gpkea}{\ensuremath{P_{{\nabla,\because}}}}
\newcommand{\gpkeaf}{\ensuremath{P_{f,{\nabla,\because}}}}
\newcommand{\gpkeah}{\ensuremath{P_{h,{\nabla,\because}}}}
\newcommand{\ggkeo}[1]{\ensuremath{\Gamma_{\nabla,#1,\because}}}
\newcommand{\gpkeo}[1]{\ensuremath{P_{\nabla,#1,\because}}}
\newcommand{\gpkeof}[1]{\ensuremath{P_{f,{\nabla,#1,\because}}}}
\newcommand{\gpkeoh}[1]{\ensuremath{P_{h,{\nabla,#1,\because}}}}
\newcommand{\ggek}[2]{\ensuremath{\Gamma_{\because,#1,\nabla,#2}}}
\newcommand{\gpek}[2]{\ensuremath{P_{\because,#1,\nabla,#2}}}
\newcommand{\ggee}[2]{\ensuremath{\Gamma_{\because,#1,\because,#2}}}
\newcommand{\gpee}[2]{\ensuremath{P_{\because,#1,\because,#2}}}
\newcommand{\ggeea}{\ensuremath{\Gamma_{{\because,\because}}}}
\newcommand{\gpeea}{\ensuremath{P_{{\because,\because}}}}
\newcommand{\gpeeaf}{\ensuremath{P_{f,{\because,\because}}}}
\newcommand{\gpeeah}{\ensuremath{P_{h,{\because,\because}}}}
\newcommand{\gpeeag}{\ensuremath{P_{\gamma,{\because,\because}}}}
\newcommand{\ptpe}{\ensuremath{\mathcal{P}}}
\numberwithin{equation}{section}
\begin{document}

\maketitle

\begin{abstract}
We examine the family of nestohedra resulting from the complete
bipartite graph through the medium of a generating function and
demonstrate some of their combinatorial invariants.
\end{abstract}

\section{Introduction}
In \cite{VB} a ring of simple polytopes \ptpe\ was introduced, with
a formal addition akin to disjoint union, and direct product as
multiplication.  This ring inherits all the machinery that is
normally associated with polytopes.  In particular, the $f$-, $h$-
and $\gamma$-vectors are defined on the generators of this ring.
These vectors are extended, again in \cite{VB}, to polynomials with
the entries of the vectors as coefficients, known as the $f$-, $h$-
and $\gamma$-polynomials, which we will favour here. Also introduced
in \cite{VB} was a derivation, $d$, which produces the disjoint
union of the facets of a polytope. In the same paper the concept of
a generating function for a family of polytopes was developed
together with calculations of the $f$- and $h$-polynomials of some
well-known families. In this paper we are going to describe the
combinatorial invariants of some additional families of simple
polytopes. The particular polytopes involved in these examples are
constructed as in \cite{PRW}, so an overview of this construction
will also be included in this paper.  The main results of this paper
will be describing the invariants of the family of polytopes
resulting from the complete bipartite graphs and also the method
used to obtain them.

\section{The Basics}
A \emph{family of polytopes} is a collection of polytopes that share
some defining property. A family has at least one representative in
each dimension. It is an indexed by a set $J$. For example, we have
the family, $I=\{I^n\}_{n\in\nat}$, consisting of all cubes, and the
family, $\Delta=\{\Delta^n\}_{n\in\nat}$, consisting of all
simplices.

\begin{defn}\label{genfundef}
For a family of polytopes $\Psi$, with indexing set $J$, we define
the \emph{generating function} as the formal power series
$$\Psi(x) := \sum_{j \in J}s(j)\,P^n_j\, x^{n+q}$$
in $\ptpe\otimes \rat[[x]]$. In this series, the parameters $s(j)
\in \rat$ and $q \in \nat$ are chosen appropriately for the family
$\Psi$ in question, to simplify later equations.
\end{defn}


For certain families of polytopes we can choose a function $s(i)$
which depends directly on the index of the polytope. We can then
choose a $q$ which removes later correcting factors from the
differential equations, allowing the generating function to be
studied independently from the individual polytopes. All the
families studied in this paper will have this desirable property.
The choice of $s(P^{n})$ and $q$ depends on some subgroup of the
group of symmetries of the polytopes and will become more apparent
with some examples.

\begin{exm}
The first example to consider is $I$, the family of cubes. The
symmetry group of $I^{n}$, without reflections, has order $n!$, so
we choose $s(I^{n}) = 1 /n!$. We then choose $q$ to be $0$, giving
$\sum I^n x^{n} / n!$ as the generating function. We make this
choice so that when the generating function is differentiated with
respect to $x$ we get a simple expression.
\end{exm}

\begin{exm}
Another pertinent example is that of $\Delta$, the family of
simplices. In this case, the symmetry group of $\Delta^{n}$ has
order $(n+1)!$, and hence we choose $s(\Delta^{n})= 1/(n+1)!$. In
this case, we set $q=1$, giving $\sum \delta^n x^{n+1} / (n+1)!$ as
the generating function.
\end{exm}

We can extend the machinery that exists on \ptpe\ by linearity to
$\ptpe\otimes\rat[[x]]$; in particular we will make use of the
derivative $d$, the $f$-polynomial, $h$-polynomial and
$\gamma$-polynomial, all from \cite{VB}.  Their extensions are, for
a family $\Psi=\{P^n_j\}_{j\in J}$, respectively,
\begin{eqnarray*}
d\Psi(x)&=&\sum_{j \in J}s(j)\,d(P^n_j)\, x^{n+q} \in \ptpe\otimes\rat[[x]],\\
\Psi_f(\alpha,t,x)&=&\sum_{j \in J}s(j)\, f(P^n_j)(\alpha,t)\, x^{n+q} \in\rat[\alpha,t][[x]],\\
\Psi_h(\alpha,t,x)&=&\sum_{j \in J}s(j)\, h(P^n_j)(\alpha,t)\, x^{n+q} \in\rat[\alpha,t][[x]],\\
\Psi_\gamma(\tau,z)&=&\sum_{j \in J}s(j)\, \gamma(P^n_j)(\tau)\,
z^{n+q} \in\rat[\tau][[x]].
\end{eqnarray*}

In particular, we can restate the identity from Theorem 1 in
\cite{VB} as
\begin{equation}\label{did}(d\Psi)_f(\alpha,t,x)=\frac{\partial}{\partial t}\Psi_f(\alpha,t,x).\end{equation}
Similarly, the coordinate changes which relate the $h$-polynomial to
the $f$- and $\gamma$-polynomials are unaffected by the extension to
generating functions.  Setting $a=\alpha + t$, $b=\alpha t$,
$\tau=\frac{b}{a^2}$ and $z=ax$, the coordinate changes are
\begin{eqnarray*}
\Psi_f(\alpha,t,x) &=& \Psi_h(\alpha-t,t,x)\\
a^q \Psi_\gamma(\tau,z) &=& \Psi_h(\alpha,t,x).
\end{eqnarray*}

Now we overview the construction of Nestohedra from \cite{PRW}.
\begin{defn}
A \emph{building set}, $B$, is a set of subsets of $[n+1]:=
\{1,\ldots,n+1\}$, the set consisting of the first $n+1$ integers,
such that
\begin{enumerate}
\item{if $S_1,S_2\in B$ such that $S_1 \cap S_2 \neq \Phi$ then $S_1 \cup S_2 \in B$,}
\item{the set $\{i\}\in B$ for all $i \in [n+1]$.}
\end{enumerate}
A building set is called \emph{connected} if $[n+1] \in B$.
\end{defn}

For a graph, $\Gamma$, on $n+1$ nodes any numbering of then nodes
produces a building set $B(\Gamma)$. This building set consists of
all non-empty subsets, $I \subset [n+1]$, such that the induced
graph $\Gamma|_I$ is connected.  A building set constructed from a
graph will be called a \emph{graphical building set}. A connected
graph will produce a connected building set.

\begin{defn}
For a building set $B$, the \emph{nestohedron}, $P_B$, is the
Minkowski sum
$$P_B = \sum_{S\in B} \Delta_S$$
where $\Delta_S := \mathrm{ConvexHull}\{e_i|i\in S\}$ and $e_i$ is
the tip of the standard unit basis vector.
\end{defn}

Note that \cite{PRW} proves that nestohedra are always simple and
that all graphical nestohedra are flag. We can also generalise a
result of \cite{PRW} to give us an expression for the differential
of a nestohedron.

\begin{thm}[\cite{AF}]\label{mylem}
For a nestohedra, $P_B$, on a connected building set $B$, we have;
$$d(P_B) = \sum_{S\in B\slash[n+1]} P_{B|S} \times P_{B-S}$$
where $B|S$ is the building set consisting of those sets in $B$
which are subsets of $S$ and $B-S$ is the building set consisting of
sets in $B$ with the elements of $S$ removed.
\end{thm}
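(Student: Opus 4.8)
The plan is to prove this by understanding the facet structure of a nestohedron $P_B$ and identifying each facet as a product of smaller nestohedra. Recall from \cite{PRW} that the facets of $P_B$ are in bijection with the elements $S \in B$ that are \emph{not} the top element $[n+1]$ (these are sometimes called the proper elements of the building set), and that the facet $F_S$ corresponding to such an $S$ is combinatorially isomorphic to a product $P_{B|S} \times P_{B/S}$, where $B|S$ is the restriction of the building set to subsets of $S$, and $B/S$ (written $B-S$ in the statement) is the \emph{contraction}, obtained by removing the elements of $S$ from every set of $B$ and discarding the empty set. First I would recall the definition of the derivation $d$ from \cite{VB}: for a simple polytope $P$, $d(P)$ is the disjoint union (i.e.\ formal sum in $\mathcal P$) of the facets of $P$. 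Hence $d(P_B) = \sum_{S} F_S$ where the sum is over facets.

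Next I would need to verify that the facet description from \cite{PRW} is exactly what is claimed, i.e.\ that under the identification of facets with proper building set elements, $F_S \cong P_{B|S} \times P_{B-S}$. This is where the bulk of the work lies and where I would lean most heavily on \cite{PRW}: one shows that a facet of a Minkowski sum of simplices is obtained by taking, for a supporting hyperplane with generic-enough normal $\xi$, the Minkowski sum of the faces of the individual $\Delta_S$ maximizing $\xi$, and then one checks that for the normals corresponding to walls of the nested set fan, the decomposition of $B$ into ``inside $S$'' and ``outside, collapsed'' pieces is precisely $B|S$ and $B-S$. One must check that $B|S$ and $B-S$ are genuinely building sets (closure under union of intersecting sets, and containing all singletons), which is a short direct check, and that $B-S$ is connected on $[n+1]\setminus S$ when $B$ is connected — actually one does not need connectedness of the pieces, since $P_{B'}$ is defined for any building set, but it is worth noting that $P_{B-S}$ genuinely makes sense as a nestohedron on the ground set $[n+1]\setminus S$ (relabelled).

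Then the proof assembles quickly: summing over all facets, $d(P_B) = \sum_{S \in B \setminus \{[n+1]\}} F_S = \sum_{S \in B/[n+1]} P_{B|S} \times P_{B-S}$, which is the claimed identity (here $B/[n+1]$ is the author's notation for $B$ with the top element removed). The main obstacle is the facet identification $F_S \cong P_{B|S}\times P_{B-S}$; since this is attributed to \cite{AF} and rests on \cite{PRW}, I would either cite it directly or reprove it by the Minkowski-sum-of-simplices argument sketched above, being careful about the genericity of the normal vector $\xi$ and the bookkeeping that separates coordinates indexed by $S$ from those indexed by its complement. A secondary technical point to handle cleanly is that the product structure on $\mathcal P$ is direct product of polytopes, so one must confirm the facet really splits as a \emph{Cartesian} product and not merely as a combinatorial join or some twisted product; this follows because the maximizing faces of the $\Delta_T$ for $T \subseteq S$ live in the coordinate subspace spanned by $\{e_i : i \in S\}$, while the collapsed contributions live in the complementary subspace, so the Minkowski sum factors as an honest product.
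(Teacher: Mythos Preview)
Your proposal is correct and follows essentially the same route as the paper: both identify the facets of $P_B$ with proper elements $S\in B$, and both justify the product decomposition $F_S\cong P_{B|S}\times P_{B-S}$ via the Minkowski-sum-of-simplices argument, splitting each summand's contribution between the orthogonal coordinate subspaces $\mathrm{span}\{e_i:i\in S\}$ and $\mathrm{span}\{e_i:i\notin S\}$ so that the Minkowski sum becomes an honest Cartesian product. The only cosmetic differences are that the paper phrases things in terms of minimising coordinate sets rather than maximising along a normal $\xi$, and it works the facet identification out directly rather than first citing \cite{PRW} and then reproving it.
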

\begin{proof}
We are looking at the facets of a Minkowski sum of faces of the
standard simplex, $\Delta^n$, which includes the standard simplex as
one of the summands. All the summands will be simplices of dimension
$m\leq n$ and all their faces will also be lower dimensional
simplices. The standard definition of the $m$-dimensional simplex is
$$\Delta^m = \left\{x=(x_1,\ldots,x_m):0\leq x_i\leq 1, \sum_{i=1}^{m} x_i=1\right\},$$
and the faces of $\Delta^m$ are subsets where some set of
coordinates are minimised.

A facet of the sum will have a contribution from each summand. This
contribution will be a face of the summand, frequently the entire
summand. Each face is defined by the set of coordinates on which it
is minimised. If these sets are not restrictions of some set $C$ the
result is not a facet of the sum, it is either in the interior of
the polytope or a face of lower dimension.

The facet, as the Minkowski sum of these faces can be split up into
the Minkowski sum of two other polytopes, $X$ and $Y$. These are
both Minkowski sums of faces, $X$ the parts of those faces in
$\mathrm{span}\{e_i\}_{i \in C}$ and $Y$ the parts of those faces in
$\mathrm{span}\{e_i\}_{i\not\in C}$. Since $A$ and $B$ are
orthogonal, we have that $X+Y=X\times Y$, the direct product.

We now examine the faces in two types, those where $\sum_{i\in
C}x_i=0$ and those where $\sum_{i\in C}x_i=1$. It is easy to show
that there are no other possibilities. A face, $\Delta_S$, of the
first type contribute 0 to $X$ and $\Delta_{S-C}$ to $Y$. A face,
$\Delta_S$, of the second type contribute $\Delta_S$ to $X$ and 0 to
$Y$.

From the above we can clearly see that $X$ is precisely $P_{B|S}$
and $Y$ is $P_{B-S}$.  The product produces a facet precisely when
both sums contain the highest possible dimension simplices. This
will only occur when $\Delta_S$ and $\Delta^n$ are present, which is
when $S$ and $[n+1]$ are distinct and contained in $B$. So for a
connected building set there is a facet for each element of $B$
apart from $[n+1]$, and it is $P_{B|S} \times P_{B-S}$.
\end{proof}

For a non-connected building set we can split it up into the product
of its connected components and use lemma \ref{mylem} result on each
component.  This result can be restated in terms of graphs for
graphical building sets as
\begin{cor}
For a connected graph $\Gamma$ on $n+1$ nodes, we have
$$d(P(\Gamma))=\sum_* P(\Gamma_G) \times P(\bar{\Gamma}_{G^c})$$
where
\begin{enumerate}
\item{$G\subsetneq\{1,\ldots,n+1\}$.}
\item{$\Gamma_G$ is the subgraph of $\Gamma$ with vertex set $G$.}
\item{$\bar{\Gamma}_{G^c}$ is the graph with vertex set $\{1,\ldots,n+1\} - G$ and arcs
between two vertices, $i$ and $j$, if they are path connected in
$\Gamma_{G\cup \{i,j\}}$.}
\item{* runs over all G such that $\Gamma_G$ is connected.}
\end{enumerate}
\end{cor}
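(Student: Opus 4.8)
The plan is to derive this corollary as a direct translation of Theorem \ref{mylem} into the language of graphs, using the correspondence between a graph $\Gamma$ on $n+1$ nodes and its graphical building set $B(\Gamma)$. First I would recall that $B(\Gamma)$ consists precisely of those non-empty $I\subset[n+1]$ for which $\Gamma|_I$ is connected, and that when $\Gamma$ is connected, $B(\Gamma)$ is a connected building set, so Theorem \ref{mylem} applies verbatim to $P_B = P(\Gamma)$. The sum in Theorem \ref{mylem} runs over $S\in B\setminus[n+1]$; under the dictionary this is exactly the collection of proper non-empty subsets $G\subsetneq[n+1]$ with $\Gamma_G$ connected, which is what the condition (*) together with (1) encodes. (One should note that if $\Gamma$ has a single node the sum is empty, consistent with $P(\Gamma)$ being a point.)

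The substance of the argument is then identifying the two building sets $B|S$ and $B-S$ appearing in Theorem \ref{mylem} with graphical building sets. For $B|S$ this is essentially immediate: a subset $I\subseteq S$ lies in $B|S = B(\Gamma)|_S$ iff $\Gamma|_I$ is connected, and since $I\subseteq S$ this is the same as $(\Gamma_S)|_I$ being connected, so $B|S = B(\Gamma_S)$ and hence $P_{B|S} = P(\Gamma_G)$ with $G = S$. This gives item (2). For $B-S$ one must check that it is the graphical building set of the graph $\bar{\Gamma}_{G^c}$ described in item (3), i.e.\ the graph on vertex set $[n+1]\setminus G$ with an edge $\{i,j\}$ whenever $i$ and $j$ are path-connected in $\Gamma_{G\cup\{i,j\}}$. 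By definition $B-S$ consists of the sets $\{T\setminus S : T\in B,\ T\cap ([n+1]\setminus S)\neq\Phi\}$ together with the singletons; one must verify that a subset $U\subseteq[n+1]\setminus G$ arises this way iff $\bar{\Gamma}_{G^c}|_U$ is connected. I would prove both inclusions: if $U = T\setminus G$ for some connected $\Gamma|_T$, then for each edge we can route a path through $\Gamma|_T$, contracting the $G$-part, to see $\bar{\Gamma}_{G^c}|_U$ is connected; conversely, a spanning tree of $\bar{\Gamma}_{G^c}|_U$ lifts to a connected subgraph of $\Gamma_{G\cup U}$ by replacing each edge with its witnessing path, whose vertex set is some connected $T$ with $T\setminus G = U$.

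The main obstacle is precisely this last verification, that $B-S$ is graphical with the stated graph — in particular checking that $\bar{\Gamma}_{G^c}$ does satisfy the building-set axioms (closure under union of intersecting sets, presence of singletons) and that it is the \emph{correct} graph, namely that two-element connectivity in $\Gamma_{G\cup\{i,j\}}$ is exactly the right edge relation to recover the connectivity pattern of all of $B-S$. One has to be slightly careful that enlarging the ambient vertex set from $\{i,j\}$ to a larger $U$ does not create connectivity not visible edge-by-edge; the spanning-tree lifting argument handles this, but the bookkeeping of which vertices of $G$ get used is the delicate point. Finally, I would remark that for a disconnected $\Gamma$ the polytope $P(\Gamma)$ factors as the product of the $P(\Gamma_i)$ over connected components $\Gamma_i$, and the Leibniz rule for $d$ (from \cite{VB}) reduces the general case to the connected one, as already noted in the text preceding the corollary.
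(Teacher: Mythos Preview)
Your approach is correct and in fact supplies substantially more than the paper does: the paper offers no proof at all for this corollary, presenting it simply as the graphical restatement of Theorem~\ref{mylem}. The identifications you spell out --- that $S\in B(\Gamma)\setminus[n+1]$ corresponds exactly to proper $G$ with $\Gamma_G$ connected, that $B|_S=B(\Gamma_G)$, and most importantly that $B-S=B(\bar\Gamma_{G^c})$ --- are precisely the content needed, and your two-inclusion argument for the last of these (contracting $G$-segments of a path in $\Gamma|_T$ in one direction, lifting a spanning tree edge-by-edge in the other) is the right way to do it. The only cosmetic point is that in your description of $B-S$ the singletons need not be listed separately, since $\{i\}\setminus S=\{i\}$ for $i\notin S$ already appears; otherwise the plan is sound and more complete than the source.
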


Of particular interest to us is the fact that nestohedra naturally
form families.  We can also combine theorem \ref{mylem} and equation
(\ref{did}) to give an easy way to calculate $\Psi_f(\alpha,t,x)$
for a family $\Psi$.  This method was employed in \cite{VB} to give
results about two families which we shall use here. These families
are $\permu=\{\perm n\}_{n\in\nat}$, the family of permutohedra,
which is a family of graphical nestohedra where $\perm n$ is
generated from the complete graph on $n+1$ nodes, and
$\astro=\{\astr n\}_{n\in\nat}$, the family of stellohedra, which is
a family of graphical nestohedra where $\astr n$ is generated from
the star graph on $n+1$ nodes. These families have generating
functions which we chose to be;
\begin{eqnarray*}
\permu(x) &=& \sum_{n=0}^{\infty} \perm n \frac{x^{n+1}}{(n+1)!} \\
\astro(x) &=& \sum_{n=0}^{\infty} \astr n \frac{x^{n}}{n!}.
\end{eqnarray*}

Theorem \ref{mylem} gives formulas for $d$ of the individual
nestohedra to be;
\begin{eqnarray*}
d(\perm n) &=& \sum_{i+j=n-1} {n+1 \choose i+1} \perm i \times \perm j \\
d(\astr n) &=& n.\astr{n-1} + \sum_{i=0}^{n-1} {n \choose i} \astr i
\times \perm{n-i-1}.
\end{eqnarray*}

Combining these two gives us that;
\begin{eqnarray*}
d\permu(x) &=& \sum_{n=0}^{\infty} \left(\sum_{i+j=n-1} {n+1 \choose i+1} \perm i \times \perm j\right) \frac{x^{n+1}}{(n+1)!}\\
&=&\permu(x)^2\\
d\astro(x) &=& \sum_{n=0}^{\infty}\left(n.\astr{n-1} + \sum_{i=0}^{n-1} {n \choose i} \astr i \times \perm{n-i-1}\right)\frac{x^{n}}{n!} \\
&=& \left( x+\permu(x)\right)\astro(x).
\end{eqnarray*}

Passing to the $f$-polynomial and using equation (\ref{did}) gives
us partial differential equations;
\begin{eqnarray*}
\frac{\partial}{\partial t}\permu_f(\alpha,t,x)&=&\permu_f^2(\alpha,t,x)\\
\frac{\partial}{\partial t}\astro_f(\alpha,t,x)&=&\left(
x+\permu_f(\alpha,t,x)\right)\astro_f(\alpha,t,x).
\end{eqnarray*}

Solving these partial differential equations yields;
\begin{eqnarray*}
\permu_f(\alpha,t,x)&=&\frac{e^{\alpha x}-1}{\alpha-t\left(e^{\alpha x}-1\right)}\\
\astro_f(\alpha,t,x)&=&e^{(\alpha+t)x}\frac{\alpha}{\alpha-t\left(e^{\alpha
x}-1\right)}.
\end{eqnarray*}

\section{The Method}
One of the applications of generating functions we will demonstrate
in this paper is to show that these families of nestohedra, which as
we have noted are flag, satisfy the Gal conjecture. In this section
we will develop the methods which allow us to make these
calculations.

\begin{con}[Gal '05] \label{Gal}
For any flag simple polytope, $P$, the $\gamma$-polynomial
$\gamma(P)$ has non-negative coefficients
\end{con}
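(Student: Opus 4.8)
The plan is to attack the conjecture by first dualising and then inducting on dimension through vertex links, the structure that makes the $\gamma$-polynomial tractable in the first place. A simple polytope $P$ of dimension $n$ is flag precisely when the boundary complex $\partial P^{*}$ of the dual simplicial polytope is a flag simplicial $(n-1)$-sphere, and $\gamma(P)$ agrees with the $\gamma$-polynomial of that sphere; so it suffices to prove that every flag simplicial sphere $\Delta$ has $\gamma(\Delta)$ with non-negative coefficients. Since the leading coefficient of $\gamma(\Delta)$ is, up to sign, the Charney--Davis quantity, this statement refines the (still open) Charney--Davis conjecture, so we should expect the full statement to be genuinely hard.

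First I would set up the inductive machinery. For a vertex $v$ of a flag simplicial sphere $\Delta$ the link $\mathrm{lk}_{\Delta}(v)$ is again a flag simplicial sphere, of one lower dimension, so the induction hypothesis supplies $\gamma(\mathrm{lk}_{\Delta}(v)) \geq 0$ coefficientwise. The task is then to express $\gamma(\Delta)$ in terms of the $\gamma$-polynomials of vertex links together with a correction term, and to show the correction is itself $\gamma$-non-negative. The natural source of such a formula is the behaviour of $\gamma$ under the basic flag-preserving operations on spheres: edge subdivision, suspension (join with $S^{0}$), connected sum along a common facet, and the Nevo--Petersen local moves. Each of these is known to preserve $\gamma$-non-negativity, so the strategy is to realise $\Delta$ as obtained from the boundary of a cross-polytope (an iterated join of $0$-spheres, with $\gamma$-polynomial $1$) by a finite sequence of such moves, and then propagate non-negativity along the sequence. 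For the families that are the subject of this paper the induction can instead be run at the level of generating functions: Theorem~\ref{mylem} together with equation~(\ref{did}) and the coordinate change $a^{q}\Psi_\gamma(\tau,z)=\Psi_h(\alpha,t,x)$ reduce the claim to exhibiting a closed form for $\Psi_\gamma$ with manifestly non-negative coefficients.

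The main obstacle — and it is a serious one — is the structural step: there is no known theorem asserting that \emph{every} flag simplicial sphere is reachable from a cross-polytope boundary by moves drawn from a list each of which preserves $\gamma$-positivity. Unlike the non-flag situation, where bistellar flips connect all PL spheres, the available flag-preserving moves are not known to be rich enough, and flagness is a global condition that resists a clean inductive reduction. Consequently the honest outcome of this plan is a proof in exactly the cases where the structural input \emph{is} available: low dimension ($n\leq 5$, from Gal's analysis of the flag $f$-vector), flag nestohedra, Coxeter complexes and barycentric-type subdivisions, and — the point of the present paper — the families of graphical nestohedra treated here via their generating functions. For arbitrary flag simple polytopes the conjecture remains open, and closing the structural gap above is precisely what a full proof would require.
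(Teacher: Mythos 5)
You have not proved the statement, but neither does the paper: the statement labelled \ref{Gal} is recorded there as Gal's conjecture, with no proof offered or claimed. The paper's actual content is to verify the conjecture for particular families of graphical nestohedra (the stellohedra $\gpkeo 1$, the family $\gpkea$, and the complete-bipartite family $\gpeea$), and it does so exactly by the mechanism you describe in your second paragraph: Theorem \ref{mylem} gives $d$ of each family, equation (\ref{did}) converts this into a differential equation for $\Psi_f$, and Lemma \ref{keylem} (the inductive term-by-term comparison of $\gamma$-coefficients after the substitution $a^{q}\Psi_\gamma(\tau,z)=\Psi_h(\alpha,t,x)$) propagates non-negativity from the initial term. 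So the part of your plan that can actually be carried out coincides with the paper's method, and your honest assessment of the rest is accurate: the dualisation to flag simplicial spheres, the link induction, and the hoped-for reduction to cross-polytope boundaries by $\gamma$-positivity-preserving moves all founder on the structural step you name, namely that no theorem guarantees every flag sphere is reachable by such moves. That gap is genuine and is precisely why the general statement remains a conjecture; no argument you could supply along these lines, nor anything in the paper, closes it. In short, your proposal is a correct description of the state of affairs rather than a proof, and for the families treated in the paper it reduces to the same generating-function induction the paper uses, so there is nothing to repair beyond being explicit that the general case is untouched.
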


Inspired by this conjecture we shall make two definitions. Firstly
we shall fix a grading on the ring   $\rat[\alpha,t][[x]]$ where
$\deg(1)=0$, $\deg(\alpha)=\deg(t)=-2$ and $\deg(x)=2$.  Then,

\begin{defn}
A \emph{Gal series} in $\mathbb{Z}[\alpha,t][[x]]$ is an element
$\psi (\alpha,t,x)$, such that,
\begin{enumerate}
\item{$\psi (\alpha,t,x)=\psi (t,\alpha,x)=\hat\psi (a,b,x)$.}
\item{$\psi$ is homogenous under the above grading.}
\item{$\hat\psi (a,b,x)$ has all non-negative coefficients.}
\end{enumerate}
\end{defn}
A family of nestohedra, $\Psi$, satisfies the Gal conjecture
precisely when $\Psi_h(\alpha,t,x)$ is a Gal series.  Note that by
the nature of the $h$-polynomial, $\Psi_h(\alpha,t,x)$ satisfies the
fist two conditions for any family of simple polytopes. To show that
$\Psi_h(\alpha,t,x)$ is a Gal series we will employ the following
lemma.

\begin{lem}\label{keylem}
Let $\Psi$ be a family of polytopes, indexed by some set $J$. If
$\Psi_h(\alpha,t,x)$ is such that;
\begin{itemize}
\item{$\frac{\partial \Psi_h(\alpha,t,x)}{\partial x}|_{x=0}$ is a Gal series.}
\item{$\frac{\partial \Psi_h(\alpha,t,x)}{\partial x}$ is a homogeneous polynomial
$$F(a,b,\Psi_h(\alpha,t,x),S_{1,h}(\alpha,t,x),\ldots,S_{k,h}(\alpha,t,x))\in\mathbb{Z}[\alpha,t][[x]]$$
with non-negative coefficients, where the $S_{i,h}(x)$, for
$i=1,\ldots,k$, are Gal series.}
\end{itemize}
Then $\Psi_h(\alpha,t,x)$ is a Gal series and the polytopes in
$\Psi$ satisfy the Gal conjecture.
\end{lem}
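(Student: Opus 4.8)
The plan is to prove the lemma by induction on the power of $x$, reading the differential equation
$$\frac{\partial \Psi_h}{\partial x}=F(a,b,\Psi_h,S_{1,h},\ldots,S_{k,h})$$
as a recursion that determines each coefficient of $x$ in $\Psi_h$ from the earlier ones. Write $\Psi_h(\alpha,t,x)=\sum_{m\geq 0}H_m(\alpha,t)\,x^m$. Since each $h$-polynomial is palindromic, every $H_m$ is symmetric under $\alpha\leftrightarrow t$, and since $\Psi_h$ is homogeneous under the stated grading (a fact noted above for any family of simple polytopes) every $H_m$ is homogeneous as well; hence conditions (1) and (2) of being a Gal series hold automatically for $\Psi_h$ and for every $H_m$, and the whole content is condition (3): that $\hat H_m(a,b)$ has non-negative coefficients for each $m$. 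For the base of the induction, comparing coefficients of $x^0$ in the differential equation shows that $H_1$ equals $\partial\Psi_h/\partial x$ evaluated at $x=0$, which is a Gal series by the first hypothesis, while $H_0=\Psi_h|_{x=0}$ is a non-negative constant, hence also a Gal series.

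For the inductive step, assume $H_0,\ldots,H_n$ are Gal series. Extracting the coefficient of $x^n$ from the differential equation gives
$$(n+1)\,H_{n+1}=[x^n]\,F(a,b,\Psi_h,S_{1,h},\ldots,S_{k,h}).$$
Because $F$ is a polynomial in $\Psi_h$ itself and not in its $x$-derivative, the right-hand side involves only $a$, $b$, the coefficients $H_0,\ldots,H_n$, and finitely many coefficients of the $S_{i,h}$; in particular $H_{n+1}$ does not occur on the right, so the recursion is well-posed. Substituting $a=\alpha+t$, $b=\alpha t$ and reading off coefficients in the variables $a,b$: the polynomial $F$ has non-negative coefficients by hypothesis, each $\hat H_j$ with $j\leq n$ has non-negative coefficients by the inductive hypothesis, and each $\hat S_{i,h}$ has non-negative coefficients because $S_{i,h}$ is a Gal series; therefore the right-hand side, being a sum of products of these non-negative quantities, has non-negative coefficients in $a,b$. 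Division by the positive integer $n+1$ preserves this, so $\hat H_{n+1}$ has non-negative coefficients, and together with the automatic symmetry and homogeneity this makes $H_{n+1}$ a Gal series. By induction every $H_m$, and hence $\Psi_h$ itself, is a Gal series. Finally, the passage from $\hat\Psi_h(a,b,x)$ to $\Psi_\gamma(\tau,z)$ consists of the substitutions $b\mapsto\tau a^2$, $x\mapsto z/a$ followed by multiplication by a power of $a$, so non-negativity of the coefficients of $\hat\Psi_h$ carries over to $\Psi_\gamma$; since each coefficient of $\Psi_\gamma$ is a non-negative multiple of the $\gamma$-polynomial of the corresponding polytope, these all have non-negative coefficients and the polytopes in $\Psi$ satisfy the Gal conjecture.

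The argument has no substantive obstacle --- it is essentially a bookkeeping induction --- and the two points that most need care are: checking that $[x^n]F$ genuinely depends only on $H_0,\ldots,H_n$, so that no hidden circularity arises (in particular when $F$ contains a term linear in $\Psi_h$ whose coefficient does not involve $x$); and checking that moving between the $(\alpha,t)$- and $(a,b)$-presentations commutes with the coefficient extractions and substitutions used, so that the non-negativity hypotheses on $F$ and on the $S_{i,h}$ combine cleanly with the inductive hypothesis to yield non-negativity of $\hat H_{n+1}$.
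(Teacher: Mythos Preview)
Your proof is correct and follows essentially the same approach as the paper's: both read the differential equation as a recursion and induct on the power of $x$ (equivalently $z$), using non-negativity of $F$ and of the $S_i$ to propagate non-negativity to the next coefficient. The only cosmetic difference is that the paper first passes to $\gamma$-coordinates via $a\mapsto 1$, $b\mapsto\tau$, $x\mapsto z$ and runs the induction there, whereas you stay in the $(a,b,x)$ presentation throughout and only translate to $\gamma$ at the end; since the coordinate change is just a homogeneous rescaling, the two arguments are interchangeable.
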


\begin{proof}
Let $$\frac{\partial \Psi_h}{\partial
x}(\alpha,t,x,y)=F(a,b,\Psi_h(\alpha,t,x,y),S_{i,h}(\alpha,t,x))$$
then by applying the standard substitutions that give the
$\gamma$-polynomial we have that
\begin{equation}\label{PDGam}
\frac{\partial \Psi_\gamma}{\partial
z}(\tau,z)=F(1,\tau,\Psi_\gamma(\tau,z),S_{i,\gamma}(\tau,z)).
\end{equation}

We have
$$\Psi_\gamma(\tau,z)=\sum_{j \in J}s(j)\gamma(\Psi^n_j)z^{n+q}$$
$$S_{i,\gamma}(\tau,z)=\sum_{n=1}^{\infty}S_i^n(\alpha,t) z^{n}$$
$$\frac{\partial \Psi_\gamma}{\partial z}(\tau,z)=\sum_{j \in J}(n+q)s(j)\gamma(\Psi^n_j)z^{n+q-1}$$
where $s(\Psi^n)$ are known and positive and $S_i^n(\alpha,t)$ is a
homogeneous symmetric polynomial.

Examining equation (\ref{PDGam}) term by term in $z$ gives an
identity for $\gamma(\Psi^n)$ expressed as a polynomial with
non-negative coefficients in $\tau$, $\gamma(\Psi^m)$ for $m<n$ and
$S^m_i$, $i=1,\ldots,k$ for $m\leq n$.  Thus since $S^m_{i,\gamma}$
has non-negative coefficients of $\tau$, if $\gamma(\Psi^m)$ has all
non-negative coefficients of $\tau$, for all $m<n$, then
$\gamma(\Psi^n)$ has all non-negative co-efficient of $\tau$.

Since $\frac{\partial \Psi_h}{\partial x}|_{x=0}$ is Gal, so is
$\frac{\partial \Psi_\gamma}{\partial x}|_{x=0} = \gamma(\Psi^1)$.
Thus, by induction, $\gamma(\Psi^n)$ has non-negative coefficients
for all n.  Consequently $\Psi_\gamma(\tau,z)=\hat \Psi_h(a,b,x)$
has all non-negative coefficients and $\Psi_h(\alpha,t,x,y)$ meets
the third property for being a Gal series.  Since it is related to
$\Psi(x,y)$ by the $h$-polynomial, it automatically meets the other
two conditions and so is a Gal series.
\end{proof}

\section{The Families}
Also in this paper we will calculate the $f$- and $h$-polynomials
and demonstrate that the Gal conjecture holds for another family of
graphical nestohedra. The family we will consider will be those
nestohedra generated by the complete bipartite graphs $K_{m,n}$.
However to make these calculations we will have to study some other
families first. In this section we will define these families.

The graphs producing these families will take the form of the join
of two graphs.

\begin{defn}
For two graphs $X^m$ and $Y^n$ we obtain the \emph{join},
$X+Y=\gengra XmYn$, from $X \cup Y$ by adding in all edges between
$X$ and $Y$.
\end{defn}

We shall denote  the resultant nestohedra as \genpol XmYn.  The
complete bipartite graph, $K_{m,n}$, is the join of the graph on $m$
nodes with no edges and the graph on $n$ nodes with no edges, so we
will denote $K_{m,n}$ by $\ggee mn$, the individual nestohedra by
$\gpee mn$ and $\gpeea$ will represent the entire family.  The other
families we shall look at will be generated by $\ggke mn$, $\ggek
mn$ and $\ggkk mn$ where $\nabla$ represents the complete graph.
However $\ggkk mn$ is the complete graph on $m+n$ nodes, so $\gpkk
mn = \perm {m+n-1}$ and $\ggke mn =\ggek nm$. As such we only need
to do the calculations for the families $\gpkea$ and $\gpeea$.

To preform these calculations we need to have generating functions
for these families.  We define these generating functions to be;
\begin{eqnarray*}
\gpkea(x,y) &=& \sum_{k=0}^{\infty}\sum_{l=0}^{\infty} \gpke kl \frac{x^{k}}{k!}\frac{y^{l}}{l!} \\
\gpeea(x,y) &=& \sum_{k=0}^{\infty}\sum_{l=0}^{\infty} \gpee kl
\frac{x^{k}}{k!}\frac{y^{l}}{l!}.
\end{eqnarray*}
These generating functions have two variables $x$ and $y$, rather
than just $x$. However, the addition of an extra variable is
consistent with the motivation behind definition \ref{genfundef} and
all the machinery used above extends by linearity to the two
variable case.

\section{The First Family, \gpkeo 1=\astro *}
We will now apply our method to \ggkeo 1(x,y). We notice that this
family of graphs is featured in \cite{VB} as the stellohedron
\astro(x), and we have that
\begin{eqnarray*}
\astro(x)&=&\sum_{n\geq0}\astr n \frac{x^n}{n!}\\
d\astro(x)&=&(x+\permu(x))\astro(x)\\
\astro_f(\alpha,t,x)&=&e^{(\alpha+t)x}\frac{\alpha}{\alpha-t(e^{\alpha
x}-1)}
\end{eqnarray*}
which were obtained using the same method employed here.

However, this generating function does not match our general
generating function for the join of a complete graph and an empty
graph. Fortunately, the difference in the generating functions is a
constant factor of $y$.  So we can extend the above results to
$$\gpkeof 1(\alpha,t,x,y)=e^{(\alpha+t)x}\frac{\alpha
y}{\alpha-t(e^{\alpha x}-1)}.$$

We know from \cite{PRW} that each individual stellohedron satisfies
the Gal conjecture since it is generated by a chordal graph; that is
one with no induced cycles of length 4 or more. However, we would
like to show that our method works independently of this result, so
we shall apply it to the series generated by the stellohedra.

\begin{thm}\label{gpkeoisGal}
The series $\gpkeoh 1(\alpha,t,x,y)$ is a Gal series.
\end{thm}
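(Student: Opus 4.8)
The plan is to verify the two hypotheses of Lemma~\ref{keylem} for the family $\gpkeo 1$, with the permutohedral series $\permu_h$ playing the role of the single auxiliary Gal series $S_{1,h}$. First I would pass to the $h$-polynomial generating functions: applying to $\gpkeof 1$ and to $\permu_f$ the coordinate change relating the $f$- and $h$-polynomials (the substitution $\alpha\mapsto\alpha-t$) gives
$$\gpkeoh 1(\alpha,t,x,y)=e^{\alpha x}\,\frac{(\alpha-t)\,y}{\alpha-t\,e^{(\alpha-t)x}},\qquad \permu_h(\alpha,t,x)=\frac{e^{(\alpha-t)x}-1}{\alpha-t\,e^{(\alpha-t)x}}.$$
Comparing the two denominators, and using the $\alpha\leftrightarrow t$ symmetry of $\gpkeoh 1$ and of $\permu_h$, one reads off the identities
$$\gpkeoh 1(\alpha,t,x,y)=y\,e^{\alpha x}\bigl(1+t\,\permu_h\bigr)=y\,e^{tx}\bigl(1+\alpha\,\permu_h\bigr).$$

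The heart of the argument is to produce a differential equation for $\gpkeoh 1$ in $x$ of the shape demanded by Lemma~\ref{keylem}. A short computation from the explicit formula for $\permu_h$ gives
$$\frac{\partial}{\partial x}\permu_h=(1+\alpha\,\permu_h)(1+t\,\permu_h)=1+a\,\permu_h+b\,\permu_h^{\,2},$$
which is itself the hypothesis of Lemma~\ref{keylem} for the permutohedra (with base term $\partial_x\permu_h|_{x=0}=h(\perm 0)=1$), so that $\permu_h$ is a Gal series --- equivalently this records the classical non-negativity of the $\gamma$-vectors of the Eulerian polynomials. Now differentiating the product form $\gpkeoh 1=y\,e^{\alpha x}(1+t\,\permu_h)$ and substituting this expression, the common factor $y\,e^{\alpha x}(1+t\,\permu_h)=\gpkeoh 1$ comes out and everything collapses to
$$\frac{\partial}{\partial x}\gpkeoh 1=\gpkeoh 1\cdot\bigl(a+b\,\permu_h\bigr).$$

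It then remains to check the two conditions of Lemma~\ref{keylem}. The right-hand side above is $F\bigl(a,b,\gpkeoh 1,\permu_h\bigr)$ for $F(a,b,P,S)=P\,(a+bS)$, a polynomial with non-negative integer coefficients, and it is homogeneous for the grading (extended by $\deg y=2$); moreover $\permu_h$ is a Gal series, as just noted. Setting $x=0$ and using $\gpkeoh 1|_{x=0}=y$ and $\permu_h|_{x=0}=0$ gives $\partial_x\gpkeoh 1|_{x=0}=a\,y$, which is symmetric in $\alpha$ and $t$, homogeneous, and has non-negative coefficients after the $a,b$ substitution, hence is a Gal series. Lemma~\ref{keylem} then yields that $\gpkeoh 1(\alpha,t,x,y)$ is a Gal series, as claimed. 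I expect the only real obstacles to be keeping the $f$-to-$h$ coordinate change and the extra variable $y$ straight, and the simplification that produces the differential equation for $\permu_h$; the remaining manipulations are routine.
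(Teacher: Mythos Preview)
Your proof is correct and takes essentially the same approach as the paper: both establish the differential equation $\partial_x\gpkeoh 1 = (a+b\,\permu_h)\,\gpkeoh 1$ and then invoke Lemma~\ref{keylem} with $\permu_h$ as the auxiliary Gal series. Your derivation via the factorisation $\gpkeoh 1=y\,e^{\alpha x}(1+t\,\permu_h)$ is a little cleaner than the paper's direct differentiation, and you additionally verify that $\permu_h$ itself is Gal via the same lemma (the paper simply asserts this), but the core argument is identical.
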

\begin{proof}
We will use \ref{keylem} for this. We begin by calculating the
partial derivative of the $h$-polynomial with respect to $x$
\begin{eqnarray*}
\gpkeoh 1(\alpha,t,x,y)&=& \gpkeof 1(\alpha-t,t,x) = e^{(\alpha+t)x}\frac{(\alpha-t)y}{\alpha e^{tx}-te^{\alpha x}} \\
\frac{\partial}{\partial x}\gpkeoh 1(\alpha,t,x,y) &=&
(\alpha+t)e^{(\alpha+t)x}\frac{(\alpha-t)y}{\alpha e^{tx}-te^{\alpha
x}}+
e^{(\alpha+t)x}\frac{(\alpha-t)y(\alpha e^{tx}-te^{\alpha x})}{(\alpha e^{tx}-te^{\alpha x})^2} \\
&=&(\alpha+t)\gpkeoh 1(\alpha,t,x,y)+\alpha
t\permu_h(\alpha,t,x)\gpkeoh 1(\alpha,t,x,y)
\end{eqnarray*}
since $\permu_h(\alpha,t,x)$ is a Gal series, $\gpkeoh
1(\alpha,t,x,y)$ fits the conditions of \ref{keylem} if and only if
$\gpkeoh 1(\alpha,0,x,y)$ has non-negative coefficients.
$$\gpkeoh 1(\alpha,0,x,y)=\sum_n \alpha^n \frac{x^n}{n!} = ye^{\alpha x},$$
does have non-negative coefficients so $\gpkeoh 1(\alpha,t,x,y)$ is
a Gal series.
\end{proof}

Thus each individual stellohedron satisfies the Gal conjecture.

\section{The Second Family, \gpkea}\label{sgpkea}

In this section we wish to extend our calculations to show that
those nestohedra generated by all possible graphs \ggke ij satisfy
the Gal conjecture.  Unlike in the previous section we must find the
$f$-polynomial of this family explicitly.  We start out by
calculating a formula for the derivative of the family.

\begin{lem}
The formula for $d$ of the series \gpkea is $d\gpkea(x,y)=\gpkea
(x,y)\left(y+\permu(x+y)\right)$.
\end{lem}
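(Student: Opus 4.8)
The plan is to compute $d$ termwise on the individual nestohedra $\gpke kl = P(K_m^{\nabla} + \overline{K_n})$ using Theorem~\ref{mylem} (or its graphical corollary), and then recognise the resulting double sum as the product claimed. Recall that $\ggke kl$ is the join of the complete graph $\nabla$ on $k$ nodes with the empty graph on $l$ nodes, so a vertex subset $G$ induces a connected subgraph precisely when either $G$ meets the $\nabla$-part, or $G$ is a single vertex of the empty part. First I would apply the corollary: $d(P(\ggke kl)) = \sum_* P((\ggke kl)_G)\times P(\overline{(\ggke kl)}_{G^c})$, the sum running over connected induced subgraphs on proper subsets $G$. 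The key structural observation is that both $(\ggke kl)_G$ and the contraction-type graph $\overline{(\ggke kl)}_{G^c}$ are again joins of a complete graph with an empty graph, so they are of the form $\ggke {k'}{l'}$ — this is what makes the family closed under $d$ and is the crux of the argument.

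The combinatorial heart is the case analysis on $G$. Writing $G$ as choosing $i$ of the $k$ complete-vertices and $j$ of the $l$ empty-vertices, there are two situations. If $i \geq 1$ (so $G$ genuinely meets $\nabla$), then $(\ggke kl)_G = \ggke ij$ and I must check that $\overline{(\ggke kl)}_{G^c}$ is the complete graph on the remaining $(k-i)+(l-j)$ vertices: any two leftover vertices become adjacent in the auxiliary graph because they are path-connected through a vertex of $G\cap\nabla$, giving a $\permu$-factor. If $i=0$ then $G$ must be a single empty-part vertex ($j=1$), contributing $\gpke 01 = \astr 0 = \mathrm{pt}$ and a $\permu^{k}$-type leftover; but in fact $\gpke 01$ is a point, so this family of terms should reassemble into the ``$y$'' summand. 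I would then assemble:
\begin{eqnarray*}
d(\gpke kl) &=& \sum_{\substack{i+a=k,\ j+b=l\\ i\geq 1}}\binom{k}{i}\binom{l}{j}\,\gpke ij\times\perm{a+b-1} \\
&& {}+\ (\text{terms from }i=0).
\end{eqnarray*}

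Then I would multiply by $\frac{x^k}{k!}\frac{y^l}{l!}$ and sum over $k,l\geq 0$. The binomial coefficients $\binom{k}{i}\binom{l}{j}$ are exactly the factors that turn the Cauchy-type double sum into a product of exponential generating functions: the $\gpke ij$ block reassembles as $\gpkea(x,y)$, while the $\perm{a+b-1}$ block, summed against $\frac{x^a}{a!}\frac{y^b}{b!}$ over $a+b=n$, collapses by the binomial theorem to $\permu$ evaluated at $x+y$, i.e. $\sum_n \perm{n-1}\frac{(x+y)^n}{n!} = \permu(x+y)$ (recalling $\permu(x)=\sum \perm n x^{n+1}/(n+1)!$). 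The leftover $i=0$ contributions, where the first factor is just a point and the exponents bookkeep a single distinguished empty-part vertex, produce the $y\cdot\gpkea(x,y)$ term. Combining gives $d\gpkea(x,y)=\gpkea(x,y)\bigl(y+\permu(x+y)\bigr)$.

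The main obstacle I anticipate is the precise handling of the auxiliary graph $\overline{(\ggke kl)}_{G^c}$ in the corollary — specifically confirming that when $G$ contains at least one $\nabla$-vertex the leftover graph is genuinely complete (so we get a clean $\perm{a+b-1}$ and not some other nestohedron), and correctly separating out the degenerate $i=0,\ j=1$ family of terms so that they, together with the ``diagonal'' contributions, sum to exactly $y\gpkea(x,y)$ rather than something with an off-by-one error in the exponents. Getting the index bookkeeping right between $q$-shifts in the permutohedron generating function and the plain exponential generating function of $\gpkea$ is where care is needed, but it is routine once the two-case structure of $G$ is in hand.
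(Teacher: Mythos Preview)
Your approach is the same as the paper's: compute $d(\gpke kl)$ via the graphical corollary by a case split on whether $G$ meets the $\nabla$-part, then reassemble the exponential generating function into a Cauchy product. The paper simply records
\[
d\gpke st \;=\; t\,\gpke s{t-1} \;+\; \sum_{i=1}^{s}\sum_{j=0}^{t}\binom{s}{i}\binom{t}{j}\gpke ij\times\perm{s+t-i-j-1}
\]
and then performs exactly the factorisation you outline.

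One point in your write-up is wrong and would derail the computation if taken literally. In the $i=0$, $j=1$ case you describe the second factor as a ``$\permu^k$-type leftover''. It is not: when $G$ is a single empty-part vertex, two remaining empty-part vertices $p,q$ are \emph{not} path-connected in $\Gamma_{G\cup\{p,q\}}$ (three isolated vertices from the $\because$-side), so the contracted graph $\bar\Gamma_{G^c}$ is again $\ggke k{l-1}$, not a complete graph. The correct contribution is therefore $l\cdot(\text{pt})\times\gpke k{l-1}$, and it is precisely \emph{this} that sums to $y\,\gpkea(x,y)$, as you correctly state in your conclusion. Once that description is fixed, your argument and the paper's coincide.
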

\begin{proof}

We also have from \ref{mylem} that
\begin{eqnarray*}
d\gpke st &=& t \gpke s{t-1} + \sum^{s}_{i=1}\sum^{t}_{j=0} {s
\choose i}{t \choose j} \gpke ij \times \perm{n-i-j}.
\end{eqnarray*}

Since we have two distinct indices the generating function is of two
variables.
\begin{eqnarray*}
d \gpkea &=& d \sum^{\infty}_{n=0} \sum_{k+l=n+1, k \geq 1} \gpke kl \frac{x^k}{k!}\frac{y^l}{l!} \\
&=& \sum^{\infty}_{n=0} \sum_{k+l=n+1, k \geq 1} l\gpke k{l-1}\frac{x^k}{k!}\frac{y^l}{l!} \\
&&+ \sum^{\infty}_{n=0} \sum_{k+l=n+1, k \geq
1}\sum^{k}_{i=1}\sum^{l}_{j=0} {k \choose i}{l \choose j}\gpke ij
\perm{n-i-j} \frac{x^{k}}{(k)!}\frac{y^{l}}{(l)!}.
\end{eqnarray*}

We can notice that the first sum is
\begin{eqnarray*}
\sum^{\infty}_{n=0} \sum_{k+l=n+1, k \geq 1} l\gpke
k{l-1}\frac{x^k}{k!}\frac{y^l}{l!}
&=& \sum^{\infty}_{n=0} \sum_{k+l=n+1, k \geq 1} y \gpke k{l-1}\frac{x^k}{k!}\frac{y^{l-1}}{(l-1)!}  \\
&=& y \gpkea (x,y).
\end{eqnarray*}

Now turning our attention to the second sum, setting $g=k-i$,
$h=l-j$ and utilising the identity $\sum_{i=0}^\infty \sum_{j+k=i+1,
j\geq l} a_{jk} = \sum_{j=l}^\infty\sum_{k=0}^\infty a_{jk}$
repeatedly, we have

\begin{eqnarray*}
 && \sum^{\infty}_{n=0} \sum_{k+l=n+1, k \geq 1}\sum^{k}_{i=1}\sum^{l}_{j=0} {k \choose i}{l \choose j}\gpke ij \perm{n-i-j} \frac{x^{k}}{(k)!}\frac{y^{l}}{(l)!} \\
 &=& \sum^{\infty}_{n=0} \sum_{k+l=n+1, k \geq 1}\sum^{k}_{i=1}\sum^{l}_{j=0} \frac{k!}{i!(k-i)!}\frac{l!}{j!(l-j)!}\gpke ij \perm{n-i-j} \frac{x^{k}}{(k)!}\frac{y^{l}}{(l)!} \\
 &=& \sum_{k=1}^\infty \sum_{l=0}^\infty \sum^{k}_{i=1}\sum^{l}_{j=0} \frac{1}{i!g!}\frac{1}{j!h!}\gpke ij \perm{g+h-1} x^{k}y^{l} \\
 &=& \sum_{k=1}^\infty \sum_{l=0}^\infty \sum_{i+g=k, i\geq1}\sum_{j+h=l, j\geq0} \frac{1}{i!g!}\frac{1}{j!h!}\gpke ij \perm{g+h-1} x^{i}x^g y^{j}y^h \\
 &=& \sum_{k=1}^\infty \sum_{i+g=k, i\geq1} \sum_{l=0}^\infty \sum_{j+h=l, j\geq0} \gpke ij \perm{g+h-1} \frac{x^{i}}{i!}\frac{x^g}{g!}\frac{y^{j}}{j!}\frac{y^h}{h!}\\
 &=& \sum_{i=1}^\infty\sum_{g=0}^\infty \sum_{j=0}^\infty\sum_{h=0}^\infty \gpke ij \frac{x^{i}}{i!}\frac{y^{j}}{j!} \perm{g+h-1} \frac{x^g}{g!}\frac{y^h}{h!}\\
 &=& \sum_{i=1}^\infty\sum_{g=0}^\infty \sum_{j=0}^\infty\sum_{h=0}^\infty \gpke ij \frac{x^{i}}{i!}\frac{y^{j}}{j!} \perm{g+h-1} \frac{x^g}{g!}\frac{y^h}{h!} \\
 &=& \sum_{i=1}^\infty \sum_{j=0}^\infty \sum_{g=0}^\infty \sum_{h=0}^\infty \gpke ij \frac{x^{i}}{i!}\frac{y^{j}}{j!} \perm{g+h-1} \frac{x^g}{g!}\frac{y^h}{h!}\\
&=& \left(\sum_{i=1}^\infty \sum_{j=0}^\infty  \gpke ij
\frac{x^{i}}{i!}\frac{y^{j}}{j!}\right)
 \left( \sum_{g=0}^\infty \sum_{h=0}^\infty \perm{g+h-1} \frac{x^g}{g!}\frac{y^h}{h!}\right) \\
&=& \gpkea (x,y) \permu(x+y).
\end{eqnarray*}

Recombining this we get
$$ d \gpkea (x,y) =  \gpkea (x,y)\left(y +  \permu(x+y)\right).$$
\end{proof}

Now that we have a formula for the derivative, we use it along with
equation \ref{did} to calculate the $f$-polynomial of this family.

\begin{lem}
We have that $\gpkeaf (x,y) =
e^{(\alpha+t)y}\frac{\eta(x)}{1-t\eta(x+y)}$.
\end{lem}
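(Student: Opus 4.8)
The plan is to feed the identity $d\gpkea(x,y) = \gpkea(x,y)\bigl(y + \permu(x+y)\bigr)$, just established, through equation (\ref{did}). Since the $f$-polynomial is multiplicative under direct products, and $\permu(x+y)$ is obtained from $\permu(x)$ by the formal substitution $x\mapsto x+y$ (so that $\bigl(\permu(x+y)\bigr)_f=\permu_f(\alpha,t,x+y)$), applying $(\,\cdot\,)_f$ and using (\ref{did}) converts this into the differential equation
\[
\frac{\partial}{\partial t}\,\gpkeaf(\alpha,t,x,y) \;=\; \gpkeaf(\alpha,t,x,y)\Bigl(y + \permu_f(\alpha,t,x+y)\Bigr).
\]
Recalling that $\permu_f(\alpha,t,s) = \eta(s)/\bigl(1-t\eta(s)\bigr)$ with $\eta(s) = (e^{\alpha s}-1)/\alpha$, this is a first-order linear ordinary differential equation in the single variable $t$, with $\alpha$, $x$, $y$ as parameters:
\[
\frac{\partial}{\partial t}\,\gpkeaf \;=\; \gpkeaf\left(y + \frac{\eta(x+y)}{1-t\eta(x+y)}\right).
\]

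Next I would integrate. Since $\eta(x+y)$ is independent of $t$, an antiderivative of the bracketed coefficient is $ty - \ln\bigl(1-t\eta(x+y)\bigr)$, so every solution has the shape
\[
\gpkeaf(\alpha,t,x,y) \;=\; C(\alpha,x,y)\,\frac{e^{ty}}{1-t\eta(x+y)},
\]
a well-defined element of $\rat[\alpha,t][[x,y]]$ because $\eta$ has vanishing constant term. Here $C(\alpha,x,y) = \gpkeaf(\alpha,0,x,y)$, so it remains to evaluate the series at $t=0$. Specialising $t=0$ in the $f$-polynomial of a simple polytope $P$ extracts its top-degree term, $f(P)(\alpha,0) = h(P)(\alpha,0) = \alpha^{\dim P}$; for $k\geq 1$ the graph $\ggke kl$ is connected on $k+l$ vertices, so $\dim\gpke kl = k+l-1$ (the $k=0$ summands are degenerate and do not enter), whence
\[
C(\alpha,x,y) \;=\; \sum_{k\geq1}\sum_{l\geq0}\alpha^{k+l-1}\frac{x^k}{k!}\frac{y^l}{l!} \;=\; \frac{(e^{\alpha x}-1)e^{\alpha y}}{\alpha} \;=\; e^{\alpha y}\,\eta(x).
\]
Substituting back gives $\gpkeaf(\alpha,t,x,y) = e^{\alpha y}\eta(x)\,e^{ty}/\bigl(1-t\eta(x+y)\bigr) = e^{(\alpha+t)y}\,\eta(x)/\bigl(1-t\eta(x+y)\bigr)$, which is the claimed identity.

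The main obstacle is the $t=0$ initial condition: one must be confident that specialising $t=0$ in this normalisation of the $f$-polynomial really returns $\alpha^{\dim P}$ (equivalently, that $h_0=1$) and that the dimension count, together with the treatment of the degenerate $k=0$ graphs, is handled correctly — this is essentially the only place the geometry of the family enters, the rest being formal manipulation of power series. A route that avoids solving the differential equation is to verify the stated formula directly: differentiate $e^{(\alpha+t)y}\eta(x)/\bigl(1-t\eta(x+y)\bigr)$ with respect to $t$, confirm that it satisfies the displayed equation, and confirm that its value at $t=0$ equals $e^{\alpha y}\eta(x) = \gpkeaf(\alpha,0,x,y)$; uniqueness of solutions of the linear equation then finishes the proof. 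I would relegate the routine points — multiplicativity of $f$ over the product, the substitution $x\mapsto x+y$, and convergence of the geometric-series expansions — to passing remarks.
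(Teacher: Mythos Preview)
Your proposal is correct and follows essentially the same route as the paper: both pass the boundary identity through equation~(\ref{did}) to obtain the separable linear ODE in $t$, integrate to get $e^{ty}/(1-t\eta(x+y))$ times a constant of integration, and fix that constant via the $t=0$ specialisation $\gpkeaf(\alpha,0,x,y)=e^{\alpha y}\eta(x)$. Your treatment is in fact a little more explicit than the paper's about why the initial condition takes that form (the $\alpha^{\dim P}$ specialisation and the exclusion of the degenerate $k=0$ terms), but the argument is the same.
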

\begin{proof}

We can the pass to the face polynomial, we set
$\eta(z)=\frac{e^{\alpha z}-1}{\alpha}$ and we get
\begin{eqnarray*}
\frac{\partial}{\partial t}  \gpkeaf (x,y) &=&  \gpkeaf (x,y) \left(y+ \permu_f(x+y)\right) \\
&=&  \gpkeaf (x,y) \left(y- \frac{\partial}{\partial t}\ln(1-t\eta(x+y))\right) \\
\frac{\frac{\partial}{\partial t}  \gpkeaf (x,y)}{ \gpkeaf (x,y)}
&=&  \left(y- \frac{\partial}{\partial t}\ln(1-t\eta(x+y))\right) \\
\frac{\partial}{\partial t}  \ln\left(\gpkeaf (x,y)\right)
&=&  y- \frac{\partial}{\partial t}\ln(1-t\eta(x+y))\\
\gpkeaf (x,y) &=&  e^{yt}\frac{1}{1-t\eta(x+y)}c.
\end{eqnarray*}

Looking at $t=0$ we have initial conditions $\gpkeaf (\alpha,0,x,y)
=  e^{\alpha y}\eta(x)$ and so $$\gpkeaf (x,y) =
e^{(\alpha+t)y}\frac{\eta(x)}{1-t\eta(x+y)}.$$

Letting $y=0$ we have that $\gpkeaf (x,0) =
\frac{\eta(x)}{1-t\eta(x)} =\permu(x)$ as expected, since \gpke s0
=\perm s.
\end{proof}

Now that we have the $f$-polynomial of this family, we can repeat
theorem \ref{gpkeoisGal} for this family to show that it too
consists of Gal polytopes.

\begin{thm}
$\gpkeah (\alpha,t,x,y)$ is a Gal series.
\end{thm}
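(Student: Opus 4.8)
The plan is to invoke Lemma~\ref{keylem} once more, exactly in the spirit of the proof of Theorem~\ref{gpkeoisGal}, but taking $y$ — not $x$ — as the distinguished variable.

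First I would record the $h$-polynomial. The coordinate change of \cite{VB} amounts to substituting $\alpha\mapsto\alpha-t$; writing $\tilde\eta(z)=\dfrac{e^{(\alpha-t)z}-1}{\alpha-t}$ and using $(\alpha-t)+t=\alpha$, the formula for $\gpkeaf$ becomes
$$\gpkeah (\alpha,t,x,y)=e^{\alpha y}\,\frac{\tilde\eta(x)}{1-t\,\tilde\eta(x+y)}=e^{\alpha y}\,\tilde\eta(x)\bigl(1+t\,\permu_h(\alpha,t,x+y)\bigr),$$
the last equality using $\dfrac{1}{1-t\,\tilde\eta(z)}=1+t\,\permu_h(\alpha,t,z)$. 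In particular $\gpkeah (\alpha,t,x,0)=\permu_h(\alpha,t,x)$, and $\permu_h(\alpha,t,x+y)$ is a Gal series, being a substitution into the Gal series $\permu_h$.

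Next I would differentiate in $y$. Using the identity $\dfrac{\partial}{\partial z}\permu_h(\alpha,t,z)=(1+\alpha\permu_h)(1+t\permu_h)$, a routine consequence of the closed form of $\permu_h$, together with $\partial_y(x+y)=1$, the product rule collapses — after $\gpkeah$ is factored back out — to
$$\frac{\partial}{\partial y}\gpkeah (\alpha,t,x,y)=\bigl((\alpha+t)+\alpha t\,\permu_h(\alpha,t,x+y)\bigr)\,\gpkeah (\alpha,t,x,y).$$
With $a=\alpha+t$, $b=\alpha t$ the right-hand side is $F\bigl(a,b,\gpkeah (\alpha,t,x,y),\permu_h(\alpha,t,x+y)\bigr)$ for $F(a,b,\psi,S)=a\psi+bS\psi$, a polynomial with non-negative coefficients whose only series argument besides $\gpkeah $ itself is the Gal series $\permu_h(\alpha,t,x+y)$; this is the second hypothesis of Lemma~\ref{keylem}. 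For the first, put $y=0$: since $\gpkeah (\alpha,t,x,0)=\permu_h(\alpha,t,x)$, the identity gives $\left.\dfrac{\partial}{\partial y}\gpkeah \right|_{y=0}=\bigl(a+b\,\permu_h(\alpha,t,x)\bigr)\permu_h(\alpha,t,x)$, which is a Gal series. The evident two-variable form of Lemma~\ref{keylem}, applied with $y$ in the role of $x$ — so the induction runs on the $y$-degree $l$, with base case the $l=0$ slice $\permu_h(\alpha,t,x)$, a generating function of permutohedra and hence already known to be Gal — then concludes that $\gpkeah (\alpha,t,x,y)$ is a Gal series; as a sanity check, $\gpkeah (\alpha,0,x,y)=e^{\alpha y}\,\dfrac{e^{\alpha x}-1}{\alpha}$ visibly has non-negative coefficients.

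The genuine obstacle here is the choice of differentiation variable rather than any of the above computations. Imitating Theorem~\ref{gpkeoisGal} literally and differentiating in $x$ yields $\dfrac{\partial}{\partial x}\gpkeah =\bigl(1+t\,\permu_h(\alpha,t,x+y)\bigr)\bigl(e^{\alpha y}+\alpha\,\gpkeah \bigr)$, which is \emph{not} of the shape Lemma~\ref{keylem} demands: neither $e^{\alpha y}$ nor $1+t\,\permu_h(\alpha,t,x+y)$ is symmetric in $\alpha$ and $t$, so neither is a Gal series, and symmetrising them reintroduces negative coefficients (already $\alpha^2+t^2=a^2-2b$). Differentiating in $y$ is precisely what makes the correcting factor collapse to the symmetric $a+b\,\permu_h(\alpha,t,x+y)$; the only remaining care is the bookkeeping check that the two-variable version of Lemma~\ref{keylem} closes the induction on $l$ from the permutohedron base case — which, as indicated, it does.
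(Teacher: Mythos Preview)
Your argument is correct, and it is genuinely different from the paper's. The paper differentiates $\gpkeah$ with respect to $x$ and obtains
\[
\frac{\partial}{\partial x}\gpkeah(\alpha,t,x,y)=e^{(\alpha+t)y}\,\phi_h(\alpha,t,x,y)+\alpha t\,\gpkeah(\alpha,t,x,y)\,\permu_h(\alpha,t,x+y),
\]
with $\phi_h(\alpha,t,x,y)=\dfrac{\alpha e^{\alpha x}-te^{tx}}{\alpha e^{t(x+y)}-te^{\alpha(x+y)}}$. This \emph{is} in the shape of Lemma~\ref{keylem}, but only once $\phi_h$ is known to be Gal; the paper then spends a second application of the lemma (in the variable $y$) to establish that. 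Your observation that differentiating in $y$ from the outset yields the cleaner identity
\[
\frac{\partial}{\partial y}\gpkeah=\bigl(a+b\,\permu_h(\alpha,t,x+y)\bigr)\gpkeah
\]
collapses the two-step argument into one, with the permutohedron series as the $y=0$ base case. What your route buys is economy: no auxiliary series $\phi_h$ is needed. What the paper's route buys is that $\phi_h$ is isolated and shown Gal in its own right, and this is then reused verbatim in the proof for $\gpeeah$ in Section~\ref{sgpeea}, where $\phi_h$ reappears. One small correction to your commentary: your $\partial_x$ formula $(1+t\,\permu_h)(e^{\alpha y}+\alpha\,\gpkeah)$ is algebraically equal to the paper's displayed expression above, so it is not that differentiation in $x$ \emph{fails}---rather, the symmetric rewriting forces the introduction of $\phi_h$ and hence the extra step you avoid.
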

\begin{proof}
First we calculate the $h$ polynomial and we have
\begin{eqnarray*}
\gpkeah (x,y) &=& \gpkeaf ((\alpha-t),t,x,y) \\
&=& e^{\alpha y}e^{t y}\frac{e^{\alpha x}-e^{tx}}{\alpha e^{tx}e^{t
y}-te^{\alpha x}e^{\alpha y}} .
\end{eqnarray*}

So, we calculate the partial derivative or the $h$-polynomial with
respect to $x$,
\begin{eqnarray*}
\gpkeah (\alpha,t,x,y)&=& e^{(\alpha+t) y}\frac{e^{\alpha x}-e^{tx}}{\alpha e^{t(x+y)}-te^{\alpha (x+y)}} \\
\frac{\partial}{\partial x}\gpkeah (\alpha,t,x)&=& e^{(\alpha+t) y}\frac{\alpha e^{\alpha x}-t e^{tx}}{\alpha e^{t(x+y)}-te^{\alpha (x+y)}}-e^{(\alpha+t) y}\frac{(e^{\alpha x}-e^{tx})(\alpha te^{t(x+y)}-\alpha te^{\alpha (x+y)})}{(\alpha e^{t(x+y)}-te^{\alpha (x+y)})^2} \\
&=& e^{(\alpha+t) y}\phi_h (\alpha,t,x) + \alpha t \gpkeah
(\alpha,t,x,y) \permu_h (\alpha,t,x+y)
\end{eqnarray*}
where $\phi_h(\alpha,t,x,y)=\frac{\alpha e^{\alpha
x}-te^{tx}}{\alpha e^{t(x+y)}-te^{\alpha (x+ y)}}$. By \ref{keylem},
$\gpkeah (\alpha,t,x,y)$ is Gal if $\phi_h(\alpha,t,x,y)$ is Gal. We
now apply lemma \ref{keylem} to $\phi_h(\alpha,t,x,y)$
differentiating with respect to $y$ rather than $x$.
\begin{eqnarray*}
\frac{\partial}{\partial y}\frac{\alpha e^{\alpha x}-te^{tx}}{\alpha e^{t(x+y)}-te^{\alpha (x+ y)}}&=& \frac{-(\alpha e^{\alpha x}-te^{tx})(\alpha t e^{t(x+y)}-\alpha te^{\alpha (x+ y)})}{(\alpha e^{t(x+y)}-te^{\alpha (x+ y)})^2}\\
&=&\alpha t \permu_h(\alpha,t,x+y)\phi_h(\alpha,t,x,y)
\end{eqnarray*}
and we have that
\begin{eqnarray*}
\frac{\partial \phi_h(\alpha,t,x,y)}{\partial y}|_{y=0}&=&\frac{\alpha e^{\alpha x}-te^{tx}}{\alpha e^{tx}-te^{\alpha x}} \\
&=&1+(\alpha+t)\permu_h(\alpha,t,x)
\end{eqnarray*}
which is Gal. So by repeated application of the lemma, $\phi_\gamma$
is Gal and so is $\gpkeah(\alpha,t,x,y)$.
\end{proof}

\section{The Final Family, \ggeea}\label{sgpeea}

With these preliminaries over, we can finally move on to demonstrate
that the Gal conjecture holds for the family $\gpeea$, which is
generated by a family of graphs which in general is non-chordal. We
will follow the same steps as used in section \ref{sgpkea}. So we
start by finding a formula for the derivative of \gpeea.

\begin{lem}
We shall show that $d\gpeea(x,y)$ is
$$x\gpkea (y,x)  + y\gpkea (x,y)+ \gpeea(x,y)\permu(x+y)-(x+y)\permu(x+y).$$
\end{lem}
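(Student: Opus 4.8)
The plan is to follow the template of section~\ref{sgpkea}: first compute $d$ of an individual nestohedron $\gpee st$ (for $s,t\ge1$) using Theorem~\ref{mylem}, then translate the result into the generating function. Write the node set of $K_{s,t}=\ggee st$ as a disjoint union $A\sqcup B$ with $|A|=s$, $|B|=t$. A proper subset $S$ induces a connected subgraph exactly when $S$ is a single node, or $S$ meets both $A$ and $B$; these are the sets summed over in Theorem~\ref{mylem}, and I identify the two factors of each summand from the graphical restatement of that theorem. Since $K_{s,t}|_S\cong K_{|S\cap A|,|S\cap B|}$ we have $P_{B|S}=\gpee{|S\cap A|}{|S\cap B|}$; and in the deleted graph two leftover nodes lying in the same part of $K_{s,t}$ become adjacent iff $S$ contains a node of the \emph{other} part, whereas leftover nodes in opposite parts are always adjacent. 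Hence when $S$ meets both parts the deleted graph is complete, giving $P_{B-S}=\perm{s+t-|S\cap A|-|S\cap B|-1}$, while when $S$ is a single node of $A$ the deleted graph is the join of the complete graph on $B$ with the empty graph on the remaining $s-1$ nodes of $A$, so the summand is $\gpke t{s-1}$; symmetrically a single node of $B$ gives $\gpke s{t-1}$. Recording that $\binom si\binom tj$ subsets meet $A$ in $i$ nodes and $B$ in $j$ nodes, this yields
\[
d(\gpee st)=s\,\gpke t{s-1}+t\,\gpke s{t-1}+\sum_{\substack{1\le i\le s,\ 1\le j\le t\\(i,j)\ne(s,t)}}\binom si\binom tj\,\gpee ij\times\perm{s+t-i-j-1}.
\]

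Next I multiply by $\frac{x^s}{s!}\frac{y^t}{t!}$ and sum over all $s,t$; only $s,t\ge1$ contribute, as $d$ annihilates the zero-dimensional members $K_{1,0}$ and $K_{0,1}$. Because $\frac{s}{s!}=\frac1{(s-1)!}$, reindexing $s\mapsto s-1$ turns the first term into $x\,\gpkea(y,x)$---the arguments being \emph{swapped}, since the complete part here is the one of size $t$---and likewise the second term into $y\,\gpkea(x,y)$. In the remaining double sum I substitute $g=s-i$, $h=t-j$ and apply the repeated shift identity used in section~\ref{sgpkea}, factoring it as $\bigl(\sum_{i,j\ge1}\gpee ij\tfrac{x^i}{i!}\tfrac{y^j}{j!}\bigr)\bigl(\sum_{g,h\ge0}\perm{g+h-1}\tfrac{x^g}{g!}\tfrac{y^h}{h!}\bigr)$, the second factor being $\permu(x+y)$ by the identity established there. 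The side condition $(i,j)\ne(s,t)$ becomes $(g,h)\ne(0,0)$, which changes nothing since the excluded term has coefficient $\perm{-1}=0$.

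Finally $\sum_{i,j\ge1}\gpee ij\tfrac{x^i}{i!}\tfrac{y^j}{j!}=\gpeea(x,y)-x-y$, because the only members of the family with a vanishing index are the single nodes $K_{1,0}$ and $K_{0,1}$, which are points and hence contribute precisely the linear terms $x$ and $y$ to $\gpeea(x,y)$. Substituting this gives the asserted identity. The one genuinely new geometric point, compared with section~\ref{sgpkea}, is the identification $P_{B-S}=\gpke t{s-1}$ for the singleton sets---that deleting a single node of $K_{s,t}$ leaves the join of a complete graph with an empty graph. The step most liable to error is the bookkeeping in the last two paragraphs: the two index shifts, the swapped arguments distinguishing $x\,\gpkea(y,x)$ from $y\,\gpkea(x,y)$, and the low-dimensional boundary members $K_{1,0}$, $K_{0,1}$, which are exactly what produce the correction term $-(x+y)\permu(x+y)$.
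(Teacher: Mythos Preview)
Your proof is correct, and it is appreciably cleaner than the paper's. The paper's argument works only with the formula for $d(\gpee st)$ valid for $s,t\ge 2$, where the sum over connected $S$ is broken into three pieces according to whether $i<s$, $j<t$, or both; this forces a decomposition of $\gpeea(x,y)$ into the $k,l\ge 2$ part plus the boundary pieces $\gpkeo 1(x,y)+\gpkeo 1(y,x)-I^1xy$, and each of the five sums (plus the two stellohedron derivatives) is then manipulated separately, with a long cancellation at the end. Your observation that the three sums merge into the single range $1\le i\le s,\ 1\le j\le t,\ (i,j)\ne(s,t)$, and that the resulting formula remains valid for all $s,t\ge 1$ (the cases $s=1$ or $t=1$ reproducing the known stellohedron derivative), lets you run the generating-function calculation in one pass with no case analysis. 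The geometric content---identifying $P_{B-S}$ as $\gpke t{s-1}$ for singleton $S$ and as a permutohedron otherwise---is the same in both arguments; what you gain is purely organisational, but it shortens the computation substantially and makes the origin of the correction term $-(x+y)\permu(x+y)$ transparent.
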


\begin{proof}
By expanding on the work of N. Erokhovets in \cite{NE} we have that,
for $s,t\geq 2$,
\begin{eqnarray*}
d\gpee st &=& s \gpek {s-1}t + t \gpke s{t-1}
+ \sum^{s-1}_{i=1}\sum^{t-1}_{j=1} {s \choose i}{t \choose j} \gpee ij \times \perm{s+t-i-j-1}+ \\
&& \sum^{s-1}_{i=1} {s \choose i} \gpee it \times \perm{s-i-1}+
\sum^{t-1}_{j=1} {t \choose j} \gpee sj \times \perm{t-j-1}.
\end{eqnarray*}
when either $s<2$ of $t<2$ there are only two possible outcomes.
Since, if $s=0$ then we must have $t=1$ for the graph to be
connected and vice versa, we must have either $s=1$ or $t=1$ or
both.  Here we notice that $\gpee 1k = \gpke 1k = \astr k$.

Let us now examine the generating function $\gpeea(x,y)$, since we
have two distinct indices the generating function is of two
variables. We have
\begin{eqnarray*}
\gpeea(x,y) &=& \sum^{\infty}_{n=0} \sum_{k+l=n+1,k,l \geq 0} \gpee kl \frac{x^k}{k!}\frac{y^l}{l!} \\
&=& \sum^{\infty}_{n=0} \sum_{k+l=n+1,k,l \geq 2} \gpee kl \frac{x^k}{k!}\frac{y^l}{l!}+\sum^{\infty}_{n=0} \sum_{k=n,l=1} \gpee k1 \frac{x^k}{k!}y\\
&& +\sum^{\infty}_{n=0} \sum_{l=n,k=1} \gpee 1l x\frac{y^l}{l!} - \gpee 11 xy \\
&=& \sum^{\infty}_{k=2}\sum^{\infty}_{l=2} \gpee kl \frac{x^k}{k!}\frac{y^l}{l!}+\sum^{\infty}_{k=0} \gpee k1 \frac{x^k}{k!}y +\sum^{\infty}_{l=0} \gpee 1l x\frac{y^l}{l!} - \gpee 11 xy \\
&=& \sum^{\infty}_{k=2}\sum^{\infty}_{l=2} \gpee kl
\frac{x^k}{k!}\frac{y^l}{l!}+ \gpkeo 1(x,y) + \gpkeo 1(y,x) - I^1
xy.
\end{eqnarray*}

so
\begin{eqnarray*}
d\gpeea(x,y) &=& d\sum^{\infty}_{k=2}\sum^{\infty}_{l=2} \gpee kl \frac{x^k}{k!}\frac{y^l}{l!}+ d\gpkeo 1(x,y) +  d\gpkeo 1(y,x) - d(I^1) xy \\
&=& \sum^{\infty}_{k=2}\sum^{\infty}_{l=2} \left( k \gpek {k-1} l +
l \gpke k {l-1}
+ \sum^{k-1}_{i=1}\sum^{l-1}_{j=1} {k \choose i}{l \choose j} \gpee ij \times \perm{k+l-i-j-1}\right. \\
&& +\left. \sum^{k-1}_{i=1} {k \choose i} \gpee il \times
\perm{k-i-1}+
\sum^{l-1}_{j=1} {l \choose j} \gpee kj \times \perm{l-j-1} \right) \frac{x^k}{k!}\frac{y^l}{l!}\\
&&+ d\gpkeo 1(x,y) + d\gpkeo 1(y,x) - d(I^1)xy \\
&=& \sum^{\infty}_{k=2}\sum^{\infty}_{l=2} \frac{x^k}{k!}\frac{y^l}{l!} k \gpek {k-1}l + \sum^{\infty}_{k=2}\sum^{\infty}_{l=2} \frac{x^k}{k!}\frac{y^l}{l!} l \gpke k{l-1}\\
&&+ \sum^{\infty}_{k=2}\sum^{\infty}_{l=2} \frac{x^k}{k!}\frac{y^l}{l!} \sum^{k-1}_{i=1}\sum^{l-1}_{j=1} {k \choose i}{l \choose j} \gpee ij \times \perm{k+l-i-j-1} \\
&& + \sum^{\infty}_{k=2}\sum^{\infty}_{l=2}
\frac{x^k}{k!}\frac{y^l}{l!} \sum^{k-1}_{i=1} {k \choose i} \gpee il
\times \perm{k-i-1}
+ \sum^{\infty}_{k=2}\sum^{\infty}_{l=2} \frac{x^k}{k!}\frac{y^l}{l!} \sum^{l-1}_{j=1} {l \choose j} \gpee kj \times \perm{l-j-1}  \\
&&+ (x+\permu(x))\gpkeo 1(x,y) + (y+\permu(y))\gpkeo 1(y,x) -
d(I^1) xy.
\end{eqnarray*}

Let us now consider each sum in turn.  Taking the first sum, we have
that
\begin{eqnarray*}
\sum^{\infty}_{k=2}\sum^{\infty}_{l=2} \frac{x^k}{k!}\frac{y^l}{l!} k \gpek {k-1}l &=& x\sum^{\infty}_{k=2}\sum^{\infty}_{l=2} \frac{x^{k-1}}{(k-1)!}\frac{y^l}{l!} \gpek {k-1}l \\
&=& x\sum^{\infty}_{k=1}\sum^{\infty}_{l=2} \frac{x^k}{k!}\frac{y^l}{l!} \gpek kl \\
&=& x\sum^{\infty}_{k=0}\sum^{\infty}_{l=1} \frac{x^k}{k!}\frac{y^l}{l!} \gpek kl \\
&&- x\sum^{\infty}_{k=1} \frac{x^k}{k!}y \gpek k1 -x\sum^{\infty}_{l=1} \frac{y^l}{l!} \gpek 0l\\
&=& x\gpkea (y,x) - x (\gpkeo 1(x,y)-y) - x\sum^{\infty}_{l=1}\perm{l-1} \frac{y^l}{l!} \\
&=& x\gpkea (y,x) - x (\gpkeo 1(x,y)-y) - x\permu(y).
\end{eqnarray*}
It is clear that the second sum is the same as the first sum with
$x$ and $y$ reversed.  Proceeding to the third sum, setting $g=k-i$
and $h=l-j$, we have
\begin{eqnarray*}
&& \sum^{\infty}_{k=2}\sum^{\infty}_{l=2} \frac{x^k}{k!}\frac{y^l}{l!} \sum^{k-1}_{i=1}\sum^{l-1}_{j=1} {k \choose i}{l \choose j} \gpee ij \times \perm{k+l-i-j-1}\\
&=& \sum^{\infty}_{k=2}\sum^{k-1}_{i=1}\sum^{\infty}_{l=2}  \sum^{l-1}_{j=1}\gpee ij \times \perm{k+l-i-j-1}\frac{x^i}{i!}\frac{x^{k-i}}{(k-i)!}\frac{y^j}{j!}\frac{y^{l-j}}{(l-j)!} \\
&=&\sum^{\infty}_{g=1}\sum^{\infty}_{i=1}\sum^{\infty}_{h=1}\sum^{\infty}_{j=1}\gpee ij \times \perm{g+h-1}\frac{x^i}{i!}\frac{x^g}{g!}\frac{y^j}{j!}\frac{y^h}{h!} \\
&=& \left(\sum^{\infty}_{i=1}\sum^{\infty}_{j=1}\gpee ij\frac{x^i}{i!}\frac{y^j}{j!}\right) \left(\sum^{\infty}_{g=1}\sum^{\infty}_{h=1}  \perm{g+h-1}\frac{x^g}{g!}\frac{y^h}{h!}\right) \\
&=& \left(\gpeea(x,y)-(x+y)\right) \left(\permu(x+y) -\sum^{\infty}_{g=1}  \perm{g-1}\frac{x^g}{g!}-\sum^{\infty}_{h=1}  \perm{h-1}\frac{y^h}{h!}\right)\\
&=& \left(\gpeea(x,y)-(x+y)\right) \left(\permu(x+y)
-\permu(x)-\permu(y)\right).
\end{eqnarray*}

Again we notice the similarities between the fourth and fifth sums.
We examine the fourth in detail, again with $g=k-i$.
\begin{eqnarray*}
\sum^{\infty}_{k=2}\sum^{\infty}_{l=2} \frac{x^k}{k!}\frac{y^l}{l!}
\sum^{k-1}_{i=1} {k \choose i} \gpee il \times \perm{k-i-1}&=&
\sum^{\infty}_{k=2}\sum^{k-1}_{i=1}\sum^{\infty}_{l=2}  \gpee il \times \perm{k-i-1} \frac{x^{k-i}}{(k-i)!}\frac{x^i}{i!}\frac{y^l}{l!}  \\
&=& \sum^{\infty}_{g=1}\sum^{\infty}_{i=1}\sum^{\infty}_{l=2}  \gpee il \times \perm{g-1} \frac{x^g}{g!}\frac{x^i}{i!}\frac{y^l}{l!} \\
&=& \left(\sum^{\infty}_{i=1}\sum^{\infty}_{l=2}  \gpee il \frac{x^i}{i!}\frac{y^l}{l!} \right)\left(\sum^{\infty}_{g=1}\perm{g-1} \frac{x^g}{g!}\right)\\
&=& \left(\sum^{\infty}_{n=0} \sum_{i+l=n+1,i,l \geq 0}  \gpee il
\frac{x^i}{i!}\frac{y^l}{l!} -\sum^{1}_{i=1}\sum^{0}_{l=0}  \gpee il
\frac{x^i}{i!}\frac{y^l}{l!}\right. \\&&
\left.-\sum^{\infty}_{i=0}\sum^{1}_{l=1}  \gpee il
\frac{x^i}{i!}\frac{y^l}{l!} \right)
\left(\sum^{\infty}_{g=1}\perm{g-1} \frac{x^g}{g!}\right)\\
&=& \left(\gpeea (x,y) - x - \gpkeo 1(x,y)
\right)\left(\permu(x)\right).
\end{eqnarray*}

We can combine all of these to get
\begin{eqnarray*}
d\gpeea(x,y) &=& \sum^{\infty}_{k=2}\sum^{\infty}_{l=2} \frac{x^k}{k!}\frac{y^l}{l!} k \gpek {k-1}l + \sum^{\infty}_{k=2}\sum^{\infty}_{l=2} \frac{x^k}{k!}\frac{y^l}{l!} l \gpke k{l-1}\\
&&+ \sum^{\infty}_{k=2}\sum^{\infty}_{l=2} \frac{x^k}{k!}\frac{y^l}{l!} \sum^{k-1}_{i=1}\sum^{l-1}_{j=1} {k \choose i}{l \choose j} \gpee ij \times \perm{k+l-i-j-1} \\
&& + \sum^{\infty}_{k=2}\sum^{\infty}_{l=2} \frac{x^k}{k!}\frac{y^l}{l!} \sum^{k-1}_{i=1} {k \choose i} \gpee il \times \perm{k-i-1}\\
&&+ \sum^{\infty}_{k=2}\sum^{\infty}_{l=2} \frac{x^k}{k!}\frac{y^l}{l!} \sum^{l-1}_{j=1} {l \choose j} \gpee kj \times \perm{l-j-1}  \\
&&+  (x+\permu(x))\gpkeo 1(x,y) + (y+\permu(y))\gpkeo 1(y,x) - d(I^1) xy \\
&=& x\gpkea (y,x) - x(\gpkeo 1(x,y)-y) - x\permu(y) + y\gpkea (x,y) - y (\gpkeo 1(y,x)-x) - y\permu(x)\\
&&+  \left(\gpeea(x,y)-(x+y)\right) \left(\permu(x+y) -\permu(x)-\permu(y)\right)\\
&& + \left(\gpeea (x,y) - x - \gpkeo 1(x,y) \right)\left(\permu(x)\right)+ \left(\gpeea (x,y) - y - \gpkeo 1(y,x) \right)\left(\permu(y)\right)  \\
&&+ (x+\permu(x))\gpkeo 1(x,y) + (y+\permu(y))\gpkeo 1(y,x) -  d(I^1) xy \\
&=& x\gpkea (y,x)  + y\gpkea (x,y) +
\gpeea(x,y)\permu(x+y)-(x+y)\permu(x+y).
\end{eqnarray*}
\end{proof}

Now that we have the formula expressing the boundary of this series
of polytopes we can, as before, use it to calculate the
$f$-polynomial of the series. Solving the appropriate differential
equation, as set out below, gives us:

\begin{lem}
We have that, with $\eta(x)$ as before,
 $\gpeeaf(\alpha,t,x,y)$ is $$\frac{1}{1-t\eta(x+y)}\left( e^{(\alpha+t)x}\eta(y)+e^{(\alpha+t)y}\eta(x)+\alpha \eta(y)\eta(x)-e^{\alpha x}\eta(y)-e^{\alpha y}\eta(x)\right)+(x+y).$$
\end{lem}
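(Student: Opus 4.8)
The plan is to turn the formula for $d\gpeea$ from the previous lemma into a differential equation for $\gpeeaf$ via the identity $(d\Psi)_f=\frac{\partial}{\partial t}\Psi_f$ of (\ref{did}), and then solve it as a first-order linear ODE in $t$, treating $\alpha$, $x$, $y$ as parameters. Passing to the $f$-polynomial in
$$d\gpeea(x,y)=x\,\gpkea(y,x)+y\,\gpkea(x,y)+\gpeea(x,y)\,\permu(x+y)-(x+y)\,\permu(x+y),$$
inserting the closed forms already established, $\permu_f(\alpha,t,z)=\eta(z)/(1-t\eta(z))$ and $\gpkeaf(\alpha,t,x,y)=e^{(\alpha+t)y}\eta(x)/(1-t\eta(x+y))$ — whence $\gpkeaf(\alpha,t,y,x)=e^{(\alpha+t)x}\eta(y)/(1-t\eta(x+y))$ by symmetry of $\eta(x+y)$ — and using multiplicativity of the $f$-polynomial on direct products, I would arrive at
$$\frac{\partial}{\partial t}\gpeeaf=\frac{x\,e^{(\alpha+t)x}\eta(y)+y\,e^{(\alpha+t)y}\eta(x)}{1-t\eta(x+y)}+\bigl(\gpeeaf-(x+y)\bigr)\frac{\eta(x+y)}{1-t\eta(x+y)}.$$

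Next I would solve this using the integrating factor $1-t\eta(x+y)$, the natural choice since $\permu_f(x+y)=-\frac{\partial}{\partial t}\ln\bigl(1-t\eta(x+y)\bigr)$, exactly as in the earlier computation of $\gpkeaf$. Multiplying through, the left side collapses to $\frac{\partial}{\partial t}\bigl[(1-t\eta(x+y))\gpeeaf\bigr]$ while on the right every denominator cancels, leaving the clean identity
$$\frac{\partial}{\partial t}\bigl[(1-t\eta(x+y))\gpeeaf\bigr]=x\,e^{(\alpha+t)x}\eta(y)+y\,e^{(\alpha+t)y}\eta(x)-(x+y)\,\eta(x+y).$$
Integrating in $t$ and using $\frac{\partial}{\partial t}e^{(\alpha+t)x}=x\,e^{(\alpha+t)x}$ gives
$$(1-t\eta(x+y))\gpeeaf=e^{(\alpha+t)x}\eta(y)+e^{(\alpha+t)y}\eta(x)-t(x+y)\eta(x+y)+C$$
with $C=C(\alpha,x,y)$ a constant of integration in $t$.

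Finally I would fix $C$ from the value at $t=0$. Since the change of variables relating $h$ and $f$ forces $f(P)(\alpha,0)=\alpha^{\dim P}$ (the top $h$-coefficient being $1$), reading $\gpeeaf(\alpha,0,x,y)$ off the generating function yields the contribution $x+y$ from the two one-vertex cases $\gpee 10,\gpee 01$, together with $\sum_{k,l\geq 1}\alpha^{k+l-1}\frac{x^k}{k!}\frac{y^l}{l!}=(e^{\alpha x}-1)(e^{\alpha y}-1)/\alpha=\alpha\eta(x)\eta(y)$ from the remaining $\gpee kl$; hence $\gpeeaf(\alpha,0,x,y)=\alpha\eta(x)\eta(y)+(x+y)$. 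Setting $t=0$ above then forces $C=\alpha\eta(x)\eta(y)+(x+y)-e^{\alpha x}\eta(y)-e^{\alpha y}\eta(x)$, and substituting back, the identity $(x+y)-t(x+y)\eta(x+y)=(x+y)(1-t\eta(x+y))$ lets the summand $(x+y)$ peel off on dividing by $1-t\eta(x+y)$, producing precisely the stated formula; as a check it then specialises at $y=0$ to $\gpeeaf(\alpha,t,x,0)=x$, as it must since only the point $\gpee 10$ survives. I expect the real difficulty to be purely organisational — keeping the $x\leftrightarrow y$ swap in the $\gpkea(y,x)$ term straight, and correctly pinning down the $t=0$ initial value (in particular deciding which disconnected $K_{k,l}$ enter the family and with what weight) — after which the ODE and the remaining algebra are routine.
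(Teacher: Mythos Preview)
Your proposal is correct and follows essentially the same route as the paper: pass to the $f$-polynomial via (\ref{did}), solve the resulting first-order linear ODE in $t$ with the integrating factor $1-t\eta(x+y)$, and fix the integration constant from the value at $t=0$. The paper's only cosmetic difference is that it first substitutes $\hat P=\gpeeaf-(x+y)$ and then writes $\hat P=P_1P_2$ with $P_1=c_1/(1-t\eta(x+y))$, which is the same integrating-factor trick in product-ansatz clothing; you instead keep $\gpeeaf$ intact and peel off the $(x+y)$ at the end via $(x+y)-t(x+y)\eta(x+y)=(x+y)(1-t\eta(x+y))$, which is arguably tidier.
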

\begin{proof}
By the identity \ref{did} we have
\begin{eqnarray*}
\frac{\partial}{\partial t}\gpeeaf(x,y) &=& x\gpkeaf (y,x)  + y\gpkeaf (x,y)\\
&&+  \gpeeaf(x,y)\permu_f(x+y)-(x+y)\permu_f(x+y) \\
&=& xe^{(\alpha+t)x}\frac{\eta(y)}{1-t\eta(x+y)}  + ye^{(\alpha+t)y}\frac{\eta(x)}{1-t\eta(x+y)}\\
&&+
\gpeeaf(x,y)\frac{\eta(x+y)}{1-t\eta(x+y)}-(x+y)\frac{\eta(x+y)}{1-t\eta(x+y)}.
\end{eqnarray*}

To solve this we shall start by setting $\hat P = \gpeeaf -(x+y)$,
then we have
$$
\frac{\partial}{\partial t}\hat P(x,y) =
\frac{xe^{(\alpha+t)x}\eta(y)+ye^{(\alpha+t)y}\eta(x)+\hat
P(x,y)\eta(x+y)}{1-t\eta(x+y)}.
$$

If we now set $\hat P=P_1P_2$ and $P_1=\frac{c_1}{1-t\eta(x+y)}$
then we get, by application of the quotient rule and integrating,
$$P_2=e^{(\alpha+t)x}\eta(y)+e^{(\alpha+t)y}\eta(x)+c_2.$$

combining all these we have
\begin{eqnarray*}
P_2(\alpha,t,x,y) &=& e^{(\alpha+t)x}\eta(y)+e^{(\alpha+t)y}\eta(x)+c_2 \\
\hat P(\alpha,t,x,y) &=& \frac{c_1}{1-t\eta(x+y)}\left( e^{(\alpha+t)x}\eta(y)+e^{(\alpha+t)y}\eta(x)+c_2\right) \\
\gpeeaf(\alpha,t,x,y) &=& \frac{c_1}{1-t\eta(x+y)}\left( e^{(\alpha+t)x}\eta(y)+e^{(\alpha+t)y}\eta(x))+c_2\right)\\
&&+(x+y).
\end{eqnarray*}

Examining the initial conditions we have that
\begin{eqnarray*}
\gpeeaf (\alpha,0,x,y) &=& \sum_{n=0}^{\infty}\sum_{k+l=n+1,k,l>0}\alpha^n \frac{x^k}{k!}\frac{y^l}{l!}+x+y\\
&=& \sum_{n=0}^{\infty}\sum_{k+l=n+1,k\geq1,l\geq 0}\alpha^n \frac{x^k}{k!}\frac{y^l}{l!}-\sum_{n=0}^{\infty}\sum_{k+l=n+1,k>0,l=0}\alpha^n \frac{x^k}{k!}+x+y\\
&=& \sum_{n=0}^{\infty}\sum_{k+l=n+1,k\geq1,l\geq 0}\alpha^n \frac{x^k}{k!}\frac{y^l}{l!}-\sum_{k=1}^{\infty}\alpha^{k+1} \frac{x^k}{k!}+x+y\\
&=& \gpkeaf (\alpha,0,x,y)-\eta(x)+x+y\\
&=& e^{\alpha y}\eta(x)-\eta(x)+x+y\\
&=& \alpha \eta(y)\eta(x)+x+y\
\end{eqnarray*}
so, setting $c_1=1$, we have
\begin{eqnarray*}
\alpha \eta(y)\eta(x)+x+y &=& \frac{1}{1}\left( e^{\alpha x}\eta(y)+e^{\alpha y}\eta(x)+c_2\right)+(x+y)\\
c_2 &=& \alpha \eta(y)\eta(x)-e^{\alpha x}\eta(y)-e^{\alpha
y}\eta(x).
\end{eqnarray*}

Then we have
\begin{eqnarray*}
\gpeeaf(\alpha,t,x,y) &=& \frac{1}{1-t\eta(x+y)}\left( e^{(\alpha+t)x}\eta(y)+e^{(\alpha+t)y}\eta(x)\right.\\
&&\left.+\alpha \eta(y)\eta(x)-e^{\alpha x}\eta(y)-e^{\alpha
y}\eta(x)\right)+(x+y).
\end{eqnarray*}
\end{proof}

With the $f$-polynomial of the family now calculated we can
demonstrate that \gpeea is a family of polytopes that satisfy the
Gal conjecture. We will do this in the same way we did for the
families \gpkeo 1 and \gpkea, using lemma \ref{keylem}. Unlike in
the previous sections we can no longer use the result from
\cite{PRW} since for $n,m\geq 2$, $\ggee nm$ is not a chordal graph.

\begin{thm}
$\gpeeah (\alpha,t,x,y)$ is a Gal series.
\end{thm}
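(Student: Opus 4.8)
The plan is to mimic the proof for $\gpkeah$, applying Lemma \ref{keylem} with respect to the variable $x$, and to handle whatever auxiliary series appear in the partial derivative by recognising them as combinations of Gal series already established — namely $\permu_h$, $\gpkeoh 1$, $\gpkeah$, and possibly $\phi_h$ from the previous section. First I would compute $\gpeeah(\alpha,t,x,y)$ by substituting $\alpha \mapsto \alpha - t$ in the $f$-polynomial formula of the preceding lemma; since $\eta(x) = (e^{\alpha x}-1)/\alpha$ becomes $(e^{(\alpha-t)x}-1)/(\alpha-t)$ under this substitution, the denominator $1 - t\eta(x+y)$ turns into $\bigl(\alpha e^{t(x+y)} - t e^{\alpha(x+y)}\bigr)/\bigl((\alpha-t)e^{t(x+y)}\bigr)$, exactly as in section \ref{sgpkea}. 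Then I would differentiate with respect to $x$. By the quotient rule, $\partial/\partial x$ of the denominator factor reproduces the $\alpha t \,\permu_h(\alpha,t,x+y)$ multiplier (this is the same computation that produced the $\alpha t\, \permu_h$ term in the two earlier theorems), so one term of $\partial \gpeeah/\partial x$ will be $\alpha t\, \permu_h(\alpha,t,x+y)\,\gpeeah(\alpha,t,x,y)$, which is manifestly a product of Gal series with a Gal series.

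The remaining terms come from differentiating the numerator $e^{(\alpha+t)x}\eta(y) + e^{(\alpha+t)y}\eta(x) + \alpha\eta(x)\eta(y) - e^{\alpha x}\eta(y) - e^{\alpha y}\eta(x)$ (after the $\alpha\mapsto\alpha-t$ substitution) and the $(x+y)$ summand. Differentiating $(x+y)$ gives $1$, which is Gal. The numerator pieces, once differentiated in $x$ and divided by the denominator $1 - t\eta(x+y)$, should each be expressible in terms of $\permu_h$, $\gpkeoh 1$, and $\phi_h$: for instance the term $e^{(\alpha+t)x}\eta(y)$ differentiates to $(\alpha+t)e^{(\alpha+t)x}\eta(y)$ plus a piece absorbed into the $\permu_h$ term, and $(\alpha+t)e^{(\alpha+t)x}\eta(y)/(1-t\eta(x+y))$ is $(\alpha+t)$ times something closely related to $\gpkeoh 1(x,y)$ up to the $e^{(\alpha+t)y}$ factor; the $e^{\alpha x}\eta(y)$ type terms, after differentiation, produce $\phi_h$-like quotients $(\alpha e^{\alpha x} - t e^{tx})/(\alpha e^{t(x+y)} - t e^{\alpha(x+y)})$ times an exponential. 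The key claim to verify is that, after collecting, every coefficient that multiplies a previously-established Gal series (or the constant $1$) is a non-negative integer polynomial in $a = \alpha+t$ and $b = \alpha t$, and that the whole expression is homogeneous of the correct degree; then Lemma \ref{keylem} applies with $\Psi = \gpeea$ and $S_{i,h} \in \{\permu_h, \gpkeoh 1, \phi_h\}$ (invoking section \ref{sgpkea} for the Gal-ness of $\phi_h$), provided the initial condition $\partial \gpeeah/\partial x|_{x=0}$ is Gal.

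For the initial condition, setting $x = 0$ in $\partial\gpeeah/\partial x$ should give a combination of $\gpkeoh 1(0,y)$, $\permu_h(\alpha,t,y)$, and $\phi_h$-type terms evaluated at $x=0$; alternatively, and more cleanly, I would note that $\gpeeah(\alpha,t,x,y)\big|_{x=0}$ recovers the $h$-polynomial of the $x$-degree-$0$ slice — which by the earlier identity $\gpee 1l = \astr l$ and $\gpee 0l$ being a point or empty — is $\gpkeoh 1(y,x)\big|_{x=0}$-related, hence Gal, so its $x$-derivative at $0$ is Gal as well. The main obstacle I anticipate is purely bookkeeping: after the $\alpha\mapsto\alpha-t$ substitution the numerator has five exponential-type summands, and differentiating each and putting everything over the common denominator $\alpha e^{t(x+y)} - t e^{\alpha(x+y)}$ produces a sizeable expression whose terms must be regrouped exactly into (non-negative integer polynomial in $a,b$) $\times$ (Gal series) form; the risk is an apparent term with a sign that cannot obviously be a polynomial in $a$ and $b$, which would force me to combine two or more summands before the $a,b$-structure becomes visible — exactly the manoeuvre used in the $\gpkea$ case where $\phi_h$ had to be split off and handled by differentiating in $y$ instead. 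If such a residual term survives, the fallback is to introduce one further auxiliary series, express $\partial(\text{that series})/\partial y$ again as a Gal-coefficient polynomial in $\permu_h$ and itself, check its $y=0$ initial value is Gal, and thereby close the induction via a second application of Lemma \ref{keylem}.
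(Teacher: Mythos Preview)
Your plan matches the paper's approach exactly: differentiate $\gpeeah$ in $x$, split off the $\alpha t\,\permu_h(\alpha,t,x+y)\,\gpeeah$ term coming from the denominator's derivative, and identify the remainder as a non-negative $(a,b)$-combination of previously established Gal series, then invoke Lemma~\ref{keylem}. In the paper the remainder works out to $(\alpha+t)\,\gpkeah(\alpha,t,y,x) + \bigl(\alpha+t+\alpha t(x+y)\bigr)\permu_h(\alpha,t,x+y) + e^{(\alpha+t)y}\phi_h(\alpha,t,x,y)$, so the auxiliary Gal series actually needed are $\permu_h$, $\gpkeah$ (with $x$ and $y$ swapped), and $\phi_h$ from Section~\ref{sgpkea}; your anticipated $\gpkeoh 1$ does not appear separately, and no second auxiliary series or further application of the lemma is required.
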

\begin{proof}
As before we start by calculating the series
$\gpeeah(\alpha,t,x,y)$.
\begin{eqnarray*}
\gpeeah(\alpha,t,x,y) &=& \gpeeaf((\alpha-t),t,x,y) \\
&=& \frac{1}{1-t\frac{e^{(\alpha-t) (x+y)}-1}{(\alpha-t)}}\left( e^{((\alpha-t)+t)x}\frac{e^{(\alpha-t) (y)}-1}{(\alpha-t)}+e^{((\alpha-t)+t)y}\frac{e^{(\alpha-t) (x)}-1}{(\alpha-t)}\right.\\
&&\left.+(\alpha-t) \frac{e^{(\alpha-t)(y) }-1}{(\alpha-t)}\frac{e^{(\alpha-t) (x)}-1}{(\alpha-t)}-e^{(\alpha-t) x}\frac{e^{(\alpha-t) (y)}-1}{(\alpha-t)}\right.\\
&&\left.-e^{(\alpha-t) y}\frac{e^{(\alpha-t) (x)}-1}{(\alpha-t)}\right)+(x+y)\\
&=& \frac{1}{\frac{\alpha e^{tx}e^{ty}-te^{\alpha x}e^{\alpha
y}}{(\alpha-t)}} \left( \frac{e^{\alpha y}e^{\alpha
x}e^{tx}-e^{\alpha x}e^{tx}e^{ty}
+e^{\alpha x}e^{\alpha y}e^{ty}-e^{\alpha y}e^{tx}e^{ty}}{(\alpha-t)}\right.\\
&&\left.+\frac{e^{tx}e^{ty}-e^{\alpha x}e^{\alpha
y}}{(\alpha-t)}\right)+(x+y).
\end{eqnarray*}

So, looking at the partial derivative of $\gpeeah(\alpha,t,x,y)$
with respect to $x$, we have,
\begin{eqnarray*}
\frac{\partial}{\partial x}\gpeeah(\alpha,t,x,y) &=&
\frac{(\alpha-t)}{\alpha t e^{tx}e^{ty}-\alpha te^{\alpha
x}e^{\alpha y}}\\&& \left( \frac{e^{\alpha y}e^{\alpha
x}e^{tx}-e^{\alpha x}e^{tx}e^{ty}
+e^{\alpha x}e^{\alpha y}e^{ty}-e^{\alpha y}e^{tx}e^{ty}}{(\alpha-t)}\right.\\
&&\left.+\frac{e^{tx}e^{ty}-e^{\alpha x}e^{\alpha y}}{(\alpha-t)}+\frac{\alpha e^{tx}e^{ty}-te^{\alpha x}e^{\alpha y}}{(\alpha-t)}(x+y)\right) \\
&&+\frac{(\alpha-t)}{\alpha e^{tx}e^{ty}-te^{\alpha x}e^{\alpha y}}
\frac{1}{(\alpha-t)}\left( (\alpha+t)e^{\alpha y}e^{\alpha
x}e^{tx}-(\alpha+t)e^{\alpha x}e^{tx}e^{ty}\right.\\&&\left.
+\alpha e^{\alpha x}e^{\alpha y}e^{ty}-t e^{\alpha y}e^{tx}e^{ty}\right.\\
&&\left.+t e^{tx}e^{ty}-\alpha e^{\alpha x}e^{\alpha y}+(\alpha  e^{tx}e^{ty}- te^{\alpha x}e^{\alpha y})+(\alpha t e^{tx}e^{ty}-\alpha te^{\alpha x}e^{\alpha y})(x+y)\right)\\
&=&\alpha t \permu_h(\alpha,t,x+y) \gpeeah + (\alpha+t)\gpkeah(\alpha,t,y,x)\\
&& +(\alpha +t+\alpha t
(x+y))\permu_h(\alpha,t,x+y)+e^{(\alpha+t)y}\frac{(\alpha e^{\alpha
x}-te^{tx})}{\alpha e^{t(x+y)}-te^{\alpha (x+ y)}}.
\end{eqnarray*}

So by our lemma, \gpeeah is a Gal series if
$\phi_h(\alpha,t,x,y)=\frac{\alpha e^{\alpha x}-te^{tx}}{\alpha
e^{t(x+y)}-te^{\alpha (x+ y)}}$ is a Gal series.  We showed that
this series was Gal in the previous section, so \gpeeag is a Gal
series by \ref{keylem}.
\end{proof}

\newpage

\end{document}